\newif\ifarxiv
\newdimen\@curXheight
\newcounter{savealgorithm}
\newcommand{\tikzmark}[1]{\tikz[overlay,remember picture] \node (#1) {};}
\newcommand*{\AddNote}[4]{%
    \begin{tikzpicture}[overlay, remember picture]
        \draw [decoration={brace,amplitude=0.5em},decorate, thick,black]
            ($(#3)!(#1.north)!($(#3)-(0,1)$)$) --  
            ($(#3)!(#2.south)!($(#3)-(0,1)$)$)
                node [align=left, text width=2.5cm, pos=0.5, anchor=west] {\hspace{0.2cm}#4};
    \end{tikzpicture}
}%
\newenvironment{subalgorithms}
 {%
  \stepcounter{algorithm}%
  \edef\currentthealgorithm{\thealgorithm}%
  \setcounter{savealgorithm}{\value{algorithm}}%
  \setcounter{algorithm}{0}%
  \renewcommand{\thealgorithm}{\currentthealgorithm\alph{algorithm}}%
 }
 {%
  \setcounter{algorithm}{\value{savealgorithm}}%
 }
\newtheorem{thm}{Theorem}[section]
\newtheorem{lem}[thm]{Lemma}
\newtheorem{prop}[thm]{Proposition}
\newtheorem{claim}[thm]{Claim}
\newtheorem{cor}[thm]{Corollary}
\newtheorem{conj}{Conjecture}[section]
\theoremstyle{definition}
\newtheorem{defn}{Definition}[section]
\newtheorem{assumption}{Assumption}[section]
\newtheorem{rem}{Remark}[section]
\newenvironment{Assumption}
{\begin{mdframed}\begin{assumption}}
		{\end{assumption}\end{mdframed}}
\def\0{\mathbf{0}}
\def\1{\mathbf{1}}
\def\E{\mathbb{E}}
\def\P{\mathbb{P}}
\def\R{\mathbb{R}}
\def\Z{\mathbb{Z}}
\def\bA{\bm{A}}
\def\bN{\bm{N}}
\newcommand{\pin}{p_{\text{in}}}
\newcommand{\pout}{p_{\text{out}}}
\newcommand{\hsigma}{\hat{\sigma}}
\newcommand{\tsigma}{\tilde{\sigma}}
\newcommand{\bsigma}{\bm{\sigma}}
\newcommand{\hbsigma}{\hat{\bsigma}}
\newcommand{\tbsigma}{\tilde{\bsigma}}
\newcommand{\btau}{\bm{\tau}}
\newcommand{\Tmap}{\hat{\Theta}^{\text{MAP}}}
\def\cA{\mathcal{A}}
\def\cD{\mathcal{D}}
\def\cE{\mathcal{E}}
\def\cF{\mathcal{F}}
\def\cG{\mathcal{G}}
\def\cH{\mathcal{H}}
\def\cI{\mathcal{I}}
\def\cL{\mathcal{L}}
\def\cP{\mathcal{P}}
\def\cS{\mathcal{S}}
\def\cT{\mathcal{T}}
\def\cV{\mathcal{V}}
\def\cY{\mathcal{Y}}
\def\cZ{\mathcal{Z}}
\def\A2i{||A^*||_{2\rightarrow\infty}}
\def\torus{(-\frac{n}{2}, \frac{n}{2}]}
\def\weq{ \ = \ }
\def\wge{ \ \ge \ }
\def\wle{ \ \le \ }
\newcommand{\sign}{\operatorname{sgn}}
\newcommand{\Ber}{\operatorname{Ber}}
\newcommand{\Bern}{\operatorname{Bernoulli}}
\newcommand{\Bin}{\operatorname{Bin}}
\newcommand{\Poi}{\operatorname{Poi}}
\newcommand{\remove}[1]{}
\newcommand{\volume}{\operatorname{vol}}
\tikzset{cross/.style={cross out, draw=black, minimum size=2*(#1-\pgflinewidth), inner sep=0pt, outer sep=0pt}, cross/.default={1pt}}
\newcommand{\Erdos}{Erd\H{o}s\xspace}
\newcommand{\Renyi}{R{\'e}nyi\xspace}
\newtheorem{example}{Example}%
\newtheorem{remark}{Remark}%
\newtheorem{definition}{Definition}
\newcommand{\abs}[1]{{\lvert#1\rvert}}
\newcommand{\zeronorm}[1]{{\lVert#1\rVert}_0}
\DeclareMathOperator*{\argmax}{arg\,max}
\numberwithin{equation}{section}
\newcommand{\GKBM}{\operatorname{GKBM}}
\newcommand{\Hel}{\operatorname{Hel}}
\newcommand{\al}{\alpha}
\newcommand{\be}{\beta}
\newcommand{\ga}{\gamma}
\newcommand{\de}{\delta}
\newcommand{\De}{\Delta}
\newcommand{\eps}{\epsilon}
\newcommand{\ka}{\kappa}
\newcommand{\la}{\lambda}
\newcommand{\La}{\Lambda}
\newcommand{\LLupdated}[1]{{\color{purple}#1}}
\newcommand{\Ham}{\operatorname{Ham}}
\newcommand{\map}{\hat{\bsigma}^{\rm MAP}}
\title{Community Detection on Block Models with Geometric Kernels}
\author{Konstantin Avrachenkov\orcidlink{0000-0002-8124-8272}\thanks{INRIA, Sophia Antipolis, 2004 Rte des Lucioles, 06902 Valbonne, France}\and B. R. Vinay Kumar\orcidlink{0000-0002-7329-8659}\thanks{Eindhoven University of Technology, PO Box 513,
5600 MB Eindhoven, The Netherlands.}\and Lasse Leskel\"{a}
\orcidlink{0000-0001-8411-8329}\thanks{Dept. of Mathematics and Systems Analysis, Aalto University, Otakaari 1, 02150 Espoo, Finland}}
\newcounter{savealgorithm}
\newcommand{\tikzmark}[1]{\tikz[overlay,remember picture] \node (#1) {};}
\newcommand*{\AddNote}[4]{%
    \begin{tikzpicture}[overlay, remember picture]
        \draw [decoration={brace,amplitude=0.5em},decorate, thick,black]
            ($(#3)!(#1.north)!($(#3)-(0,1)$)$) --  
            ($(#3)!(#2.south)!($(#3)-(0,1)$)$)
                node [align=left, text width=2.5cm, pos=0.5, anchor=west] {\hspace{0.2cm}#4};
    \end{tikzpicture}
}%
\newtheorem{thm}{Theorem}[section]
\newtheorem{lem}[thm]{Lemma}
\newtheorem{prop}[thm]{Proposition}
\newtheorem{claim}[thm]{Claim}
\theoremstyle{definition}
\newtheorem{defn}{Definition}[section]
\newtheorem{assumption}{Assumption}[section]
\newtheorem{rem}{Remark}[section]
\def\0{\mathbf{0}}
\def\1{\mathbf{1}}
\def\E{\mathbb{E}}
\def\P{\mathbb{P}}
\def\R{\mathbb{R}}
\def\Z{\mathbb{Z}}
\def\bA{\bm{A}}
\def\bN{\bm{N}}
\newcommand{\pin}{p_{\text{in}}}
\newcommand{\pout}{p_{\text{out}}}
\newcommand{\hsigma}{\hat{\sigma}}
\newcommand{\tsigma}{\tilde{\sigma}}
\newcommand{\bsigma}{\bm{\sigma}}
\newcommand{\hbsigma}{\hat{\bsigma}}
\newcommand{\tbsigma}{\tilde{\bsigma}}
\newcommand{\btau}{\bm{\tau}}
\newcommand{\Tmap}{\hat{\Theta}^{\text{MAP}}}
\def\cA{\mathcal{A}}
\def\cD{\mathcal{D}}
\def\cE{\mathcal{E}}
\def\cF{\mathcal{F}}
\def\cG{\mathcal{G}}
\def\cH{\mathcal{H}}
\def\cI{\mathcal{I}}
\def\cL{\mathcal{L}}
\def\cP{\mathcal{P}}
\def\cS{\mathcal{S}}
\def\cT{\mathcal{T}}
\def\cV{\mathcal{V}}
\def\cY{\mathcal{Y}}
\def\cZ{\mathcal{Z}}
\def\A2i{||A^*||_{2\rightarrow\infty}}
\def\torus{(-\frac{n}{2}, \frac{n}{2}]}
\def\weq{ \ = \ }
\def\wge{ \ \ge \ }
\def\wle{ \ \le \ }
\newcommand{\sign}{\operatorname{sgn}}
\newcommand{\Ber}{\operatorname{Ber}}
\newcommand{\Bern}{\operatorname{Bernoulli}}
\newcommand{\Bin}{\operatorname{Bin}}
\newcommand{\Poi}{\operatorname{Poi}}
\newcommand{\remove}[1]{}
\newcommand{\volume}{\operatorname{vol}}
\tikzset{cross/.style={cross out, draw=black, minimum size=2*(#1-\pgflinewidth), inner sep=0pt, outer sep=0pt}, cross/.default={1pt}}
\newcommand{\Erdos}{Erd\H{o}s\xspace}
\newcommand{\Renyi}{R{\'e}nyi\xspace}
\newcommand{\abs}[1]{{\lvert#1\rvert}}
\newcommand{\zeronorm}[1]{{\lVert#1\rVert}_0}
\DeclareMathOperator*{\argmax}{arg\,max}
\numberwithin{equation}{section}
\newcommand{\GKBM}{\operatorname{GKBM}}
\newcommand{\Hel}{\operatorname{Hel}}
\newcommand{\al}{\alpha}
\newcommand{\be}{\beta}
\newcommand{\ga}{\gamma}
\newcommand{\de}{\delta}
\newcommand{\De}{\Delta}
\newcommand{\eps}{\epsilon}
\newcommand{\ka}{\kappa}
\newcommand{\la}{\lambda}
\newcommand{\La}{\Lambda}
\newcommand{\LLupdated}[1]{{\color{purple}#1}}
\newcommand{\Ham}{\operatorname{Ham}}
\newcommand{\map}{\hat{\bsigma}^{\rm MAP}}
\title[Community recovery on GKBM]{Community Detection on Block Models with Geometric Kernels}
\author{
Konstantin Avrachenkov
\ORCID{0000-0002-8124-8272}
\address{
\orgdiv{INRIA}, 
\orgname{Sophia Antipolis}, 
\orgaddress{
\street{2004 Rte des Lucioles}, \state{06902 Valbonne}, \country{France}}}}
\author{B. R. Vinay Kumar*
\ORCID{0000-0002-7329-8659}
\address{
\orgdiv{Eindhoven University of Technology},  
\orgaddress{
\street{P.O. Box 513}, 
\state{5600 MB Eindhoven}, 
\country{The Netherlands}}}}
\author{Lasse Leskel\"{a}
\ORCID{0000-0001-8411-8329}
\address{
\orgdiv{Department of Mathematics and Systems Analysis}, 
\orgname{Aalto University}, 
\orgaddress{
\street{Otakaari 1}, 
\state{02150 Espoo}, 
\country{Finland}}}
\vspace{1ex}}
\theoremstyle{thmstyletwo}%
\newcommand{\myabstract}{
We introduce the Geometric Kernel Block Model that allows the study of community structures where connection probabilities are influenced by continuous spatial or geometric features,
addressing a limitation of standard block models that ignore observed node attributes.
In this model, every node possesses two independent labels:
an observed location label and a hidden community label.
A geometric kernel maps the locations of pairs of nodes to probabilities,
and edges are drawn based on both their community labels and the value of the kernel corresponding to their locations.
Given a graph so generated along with the vertex location labels,
the latent communities are to be inferred.
In this work, we establish the fundamental statistical limits for recovering the communities in such models.
Additionally, we propose a novel linear-time algorithm (in the number of edges) and show that it recovers the communities of nodes exactly up to the
information-theoretic threshold.
}
\begin{document}

\ifarxiv
    \maketitle
    \begin{abstract}
        \myabstract
    \end{abstract}
\else
    \abstract{\myabstract}
    \maketitle
\fi

\section{Introduction}

Community detection is a fundamental unsupervised learning task with applications in many domains. Its objective is to recover clusters of nodes based on their observed interactions. Stochastic block models (SBMs) provide a widely used generative framework for community-structured networks and have been extensively studied in both theory and practice (e.g.\ \cite{abbe2017community,avrachenkov2022statistical,fortunato202220} and references therein). They can be viewed as \Erdos--\Renyi\ graphs augmented with community structure. For the stochastic block model, the problem of community recovery has been investigated by Mossel, Neeman, and Sly \cite{mossel2012stochastic} in the constant average degree regime, where the authors prove the conditions for impossibility of recovering communities, and in \cite{mossel2018proof} they provide an algorithm to recover when it is indeed possible. Massouli\'{e} in \cite{massoulie2014community} provides a spectral algorithm in the same regime. When the average degree grows logarithmically with the network size, the problem of community detection has been addressed by Abbe, Bandeira, and Hall in \cite{abbe2015exact}. The paper by Abbe  \cite{abbe2017community} provides a comprehensive survey of results on SBMs; see also \cite{avrachenkov2022statistical,fortunato202220} for more recent developments in the field. 

SBMs do not capture the property of transitivity or triadic closure wherein `friends of friends are friends' prevalent in social networks. Similarly, in co-authorship networks, authors of research articles tend to collaborate more with researchers in the same region. The geometric dependence is typically evidenced by the sparsity of long-distance edges, and the abundance of triangles and short-distance edges. Likewise, several methods in image analysis \cite{gao2019geometric} or DNA haplotype reconstruction \cite{sankararaman2020comhapdet} are known to yield better results when mapped into a geometric space. The dependence on geometry is often subtle or hidden in these applications.

Random geometric graphs (RGGs) are a popular model class for spatial data. In these graphs, $N$ nodes are uniformly distributed in a bounded region and edges are placed between two points if they are within a prescribed distance $r$ of each other. Based on the average degree of a (typical) node, RGGs are said to operate in different regimes (see \cite[Chapter 13]{Penrose_2003}). In the \emph{sparse} regime, the average degree is a constant and there are numerous connected components. In the \emph{logarithmic} regime, the average degree grows logarithmic in the size of the network, $N$, and the graph is connected with high probability. Lastly, in the \emph{dense} regime, the average degree grows linearly in the network size. Recent works \cite{abbe2021community,galhotra2023community,avrachenkov2021higher} introduce communities into RGGs and investigate the problem of community detection in the different regimes.

The geometric block model (GBM) analyzed by Galhotra, Mazumdar, Pal, and Saha \cite{galhotra2023community} distributes nodes uniformly at random in a Euclidean unit sphere and connects two nodes of the same (resp.\ different) community when they are within a distance of $r_{\text{in}}$ (resp.\ $r_{\text{out}}$ ) from each other.  Here $r_{\text{in}}>r_{\text{out}}$, and they are chosen so that the RGG operates in the logarithmic regime.
The authors characterize a parameter region where community recovery is impossible. In another region, they provide a triangle-counting algorithm that recovers the communities exactly. However, there is a gap between the two regions where it is not known whether community recovery is possible. In \cite{chien2020active}, the authors Chien, Tulino, and Llorca study the clustering problem on the same model in an active learning setting. It is to be noted here that, in these works, the community recovery algorithm observes only the graph and not the locations of nodes.

Motivated by applications in DNA haplotype assembly \cite{sankararaman2020comhapdet}, Abbe, Baccelli, and Sankararaman in \cite{abbe2021community} propose the Euclidean random graph (ERG) model. Consider a Poisson point process of intensity $\la$ within a box  $\left[-\frac{n^{1/d}}{2},\frac{n^{1/d}}{2}\right]^d $ and communities assigned independently among $\{-1,+1\}$ with equal probability to all nodes. A graph is generated by connecting nodes that are within a prescribed distance $(\log n)^{1/d}$ and with probability either $p$ or $q$ based on whether they are from the same community or from different communities respectively. Here $p>q$. In the logarithmic regime, the authors of \cite{abbe2021community} provided necessary conditions on the parameters for recovering the communities given the graph and the node locations.  They obtain an information quantity 
\begin{equation}
I'(\la,p,q) = 2\la \left[1-\sqrt{pq}-\sqrt{(1-p)(1-q)}\right],
\label{eq:abs_info}
\end{equation}
that governs community recovery. Specifically, they show that if $I'(\la,p,q)<1$, no algorithm can recover the communities exactly and produce an algorithm that can recover the communities when $I'(\la,p,q)>C>1$. However, in the logarithmic regime, the conditions were not tight and the authors conjectured that one could bridge the gap to recover the communities for all possible parameter values. They also suggested an additional refinement step for their algorithm that could remove the gap. In a paper by Gaudio, Niu, and Wei \cite{gaudio2024exact}, the conjecture is resolved in the positive using a novel two-step algorithm. The first step discretizes the space and recovers communities in a small region which is then propagated throughout the space to obtain an initial estimate of the node communities. The second step refines this estimate to recover the true communities exactly. The authors in \cite{gaudio2024exact} show that with a clever choice of discretization, the gap between the necessary and sufficient conditions in \cite{abbe2021community} can indeed be closed. Additionally, their algorithm generalizes to parameter values $p,q$ not necessarily satisfying $p>q$. A subsequent work \cite{gaudio2024exactERG}, introduces the Geometric Hidden Clique Model that encompasses other geometric problems such as the geometric $\Z_2$ synchronization and the geometric submatrix localization.

In this work, we build on this latter body of literature. More specifically, while the ERG model class 
is able to capture applications with a hard spatial threshold, several practical applications involve interactions between points that vary as a function of the distance between them. For example, in a co-authorship network, the frequency of interaction typically follows a spatial hierarchy: researchers in the same city or region interact more often than those geographically distant, but less frequently than those who are in the same institution. Such interactions can be captured using soft random geometric graphs, initially proposed by Penrose in \cite{penrose2016connectivity} wherein a connection function governs the probability of connecting two points given their locations. We introduce community interactions on soft RGGs via the \emph{geometric kernel block model} (GKBM). Instead of possible edges between nodes that are within a prescribed distance from each other as in the ERG model, we introduce a connection function, referred to as a geometric kernel, that outputs a probability of connection between two nodes given their locations. The graph is generated by accounting for this probability along with the node communities, which for two communities is parameterized by $p$ and $q$.

Similar models for community detection on geometric graphs generated via a kernel have been investigated in the sparse regime by Eldan, Mikulincer, and Pieters in \cite{eldan2022community}. However, the authors think of the locations as the communities and provide a spectral algorithm to recover an embedding given the inhomogeneous Erd\H{o}s-\Renyi random graph generated using a rotational invariant kernel. Yet another closely related work is \cite{avrachenkov2021higher} wherein Avrachenkov, Bobu, and Dreveton propose the soft geometric block model where there are two spatial kernels; one for nodes within a community and the other for nodes across communities. The authors use techniques from Fourier analysis to show that higher order eigenvectors recover the communities even when the locations are unknown. However, the analysis there is limited to the dense regime of the RGG. In this work, our interest is in the logarithmic regime.

The main contributions of the present paper are:
\begin{itemize}
\item
Information-theoretic conditions on the GKBM model parameters that guarantee the possibility of exact recovery (existence of a strongly consistent estimator) of node communities for a large class of geometric kernels.
\item
A general analytical framework to obtain tight impossibility results for exact recovery on graphs generated from spatial kernels.
\item
A linear-time algorithm that achieves exact recovery under mild assumptions on the kernel.
\end{itemize}
We restrict ourselves to the case of one-dimensional RGGs in this work, but we believe that most of the techniques carry over to higher dimensions as well. The rest of the paper is organized as follows: Section~\ref{sec:model} describes the GKBM model, and Section~\ref{sec:prob_stat} states the exact recovery problem. The linear-time algorithm and main results are presented in Section \ref{sec:main_results}. The proofs of the impossibility and achievability results are provided in Section~\ref{sec:impossibility} and Section~\ref{sec:achievability}, respectively, with some auxiliary results provided in the appendix. Section~\ref{sec:conclusions} concludes the paper.

\section{Model description}
\label{sec:model}

We study a finite set of nodes $V$ embedded in a circle of circumference~$n$, which we represent as the interval $(-n/2,\,n/2]$ with endpoints identified.
The nodes are characterised by community membership labels $\sigma_v \in \{-1,+1\}$ that are assigned to all $v\in V$ independently and with equal probability. We identify the nodes with their
locations.
Given the community memberships and locations, each undirected node pair
$\{u,v\}$ is linked independently with probability
\begin{equation}
\label{eq:InteractionProduct}
P_{\sigma_u \sigma_v} Q_{uv}
\end{equation}
where 
\begin{equation}
\label{eq:InteractionKernel}
P_{\sigma_u \sigma_v}
=
\begin{cases}
p, &\quad \sigma_u = \sigma_v, \\
q, &\quad \sigma_u \ne \sigma_v,
\end{cases}
\qquad \text{and} \qquad
Q_{uv}
= \phi\bigg( \frac{\| u - v \|}{\log n}\bigg),
\end{equation}
and
$\phi \colon \R_+ \to [0,1]$ is a measurable function of bounded support representing
how interaction probabilities vary with distance
\[
\|x-y\|
\weq \min \{|x-y|, \, n-|x-y|\}.
\]
We refer to $\phi$ as the geometric kernel. The community recovery task amounts to estimating the community membership labels $\{\sigma_v\}$
from the adjacency matrix $\{A_{uv}\}$ of the observed graph and the node locations $V$.

To simplify analysis, we assume that the number of nodes is a Poisson-distributed
random variable with mean $\la n$, which implies that
node configurations restricted to disjoint spatial regions are stochastically independent,
and $\la$ equals the expected node density.
The joint law of $(V, \{\sigma_v\}, \{A_{uv}\})$ is denoted by $\P = \P^{(n)}$
and called the
Geometric Kernel Block Model
with  volume $n$, density $\la$, connection function $\phi$, and baseline intra- and inter-community link rates $p,q$.
The model is abbreviated as $\GKBM_n(\la, \phi, p, q)$.
This model smoothly interpolates between
soft geometric random graphs \cite{penrose2016connectivity} and
the standard stochastic block model \cite{abbe2017community},
with the former corresponding to $P_{\sigma \sigma'} = 1$,
and the latter to $Q_{uu'} = 1$ in \eqref{eq:InteractionProduct}.
The normalising factor $\log n$ in \eqref{eq:InteractionKernel} is chosen
so that the average degree in the graph is $\Theta(\log n)$,
which is the critical regime for the connectivity of soft random geometric graphs \cite{penrose2016connectivity,wilsherConnectivityOnedimensionalSoft2020,Wilsher_Dettmann_Ganesh_2023},
and for the exact recovery in standard stochastic block models
\cite{abbe2015exact,Mossel_Neeman_Sly_2016}.

\textbf{Notation:} $|C|$ denotes the cardinality of a set $C$.
$V(C) = V \cap C$ denotes the set of points in $C$, for a node configuration $V$.
A set $C$ is called \emph{$\delta$-occupied} if $|V(C)| \ge \delta \log n$. The Lebesgue measure of a set $C$ is denoted by $\volume(C)$.
Vectors and matrices are denoted using boldface symbols.
For example, $\bsigma=(\sigma_u)_{u \in V}$ and $\bA = (A_{uv})_{u,v\in V}$. Note that the variables $(V,\sigma_v,A_{uv})$ are all dependent on $n$.
When it is necessary to make this explicit, we use $V^{(n)}, \bsigma^{(n)}, \bA^{(n)}$. The notation $\P_V$ is the distribution of $(\bsigma,\bA)$ conditioned on $V$.
We denote
\[
\sign(x) = \begin{cases}
    +1 & \text{ if } x\ge 0, \\
    -1 & \text{ otherwise}.
\end{cases}
\]

\section{Problem statement}
\label{sec:prob_stat}
We study the unsupervised machine learning task of
recovering the community labels $\bsigma^{(n)}$ given the adjacency matrix $\bA^{(n)}$ and the location labels $V^{(n)}$.
For an estimator $\hat{\bsigma}^{(n)} = \hat{\bsigma}^{(n)}(\bA^{(n)}, V^{(n)})$, we define its permutation-invariant
Hamming distance
to
the ground-truth
community labels $\bsigma^{(n)}$
by
\begin{equation}
 \Ham(\hbsigma^{(n)},\bsigma^{(n)})
 = \min_{s \in \{\pm 1\}} |\{v\in V^{(n)} \colon \hsigma_v \ne s \sigma_v\}|.
 \label{eq:Hamming_dist}
\end{equation}
The minimum accounts for the fact that, given the node locations and the graph structure, the community labels are identifiable only up to a global flip.

An estimator is said to recover the community structure \emph{exactly} if 
\begin{equation}
\lim_{n \to \infty} \P \left( \frac{\Ham(\hbsigma^{(n)},\bsigma^{(n)})}{n} = 0 \right) \weq 1,
\label{eq:exact_recovery}
\end{equation}
and \emph{almost exactly} if
\begin{equation}
\lim_{n \to \infty} \P\left( \frac{\Ham(\hbsigma^{(n)},\bsigma^{(n)})}{n} < \eta \right)
\weq 1
\qquad \text{for every $\eta >0$}.
\label{eq:almost_exact_recovery}
\end{equation}
In this study we focus on the exact recovery task, aiming to characterise for which combinations of model parameters $(\la, \phi, p, q)$ exact recovery is possible in large networks with $n \gg 1$, and to identify a fast algorithm capable of performing this task. Our algorithm initially recovers the communities almost exactly, and refines the obtained estimate to exactly recover them. 

While previous works \cite{abbe2021community,gaudio2024exact,gaudio2024exactERG} investigate the exact recovery problem with a hard threshold geometric kernel $\phi(x) = \mathds{1}\{x\in [0,1]\}$, in the present paper we allow for a wide range of geometric kernels. We first show an impossibility result by obtaining an information-theoretic threshold,
below which no algorithm can recover the communities exactly.
On the algorithmic side, we provide an algorithm that can recover the communities exactly up to the information-theoretic threshold.
Our work builds on the algorithm in \cite{gaudio2024exact} and adapts it to general geometric kernels. Techniques such as neighbour counting do not suffice since they cannot capture the dependence with the distance.
Our algorithm initially recovers the communities exactly within a small block,
and propagates it using a function of the recovered communities with
distance-dependent
weights.
In addition, we also show matching lower bounds governed by information quantities akin to \eqref{eq:abs_info}. Our results are summarized in the next section.

\section{Main results}
\label{sec:main_results}

To state our main results, we define an information quantity 
\begin{equation}
I_{\phi}(p,q)
\ := \ 2 \int_{\R_+} \bigg( 1- \sqrt{pq} \phi(x) - \sqrt{(1-p\phi(x))(1-q\phi(x))} \bigg) \, dx
\label{eq:info_metric}
\end{equation}
and an interaction range
\begin{equation}
\zeronorm{\phi}
\ := \
\sup \{x \ge 0 \colon \phi(x) \ne 0\}.
\label{eq:kappa}
\end{equation}
The following theorem provides conditions on the model parameters for which the node communities cannot be recovered exactly.

\begin{thm}
\label{thm:impossibility}
If $\la \zeronorm{\phi} < 1$ or $\la I_\phi(p,q) < 1$,
then no estimator can exactly recover the communities
in the $\GKBM_n(\la, \phi, p, q)$ model.
\end{thm}

On the other hand, when the model parameters do not lie in the regime described in Theorem~\ref{thm:impossibility}, we provide an algorithm for community recovery detailed in Algorithm~\ref{alg:full}, and show that with the appropriate initialization it can recover the communities exactly for kernels that are bounded away from zero within the support. More formally, we have the following theorem for community recovery.

\begin{thm}
\label{thm:exact_recovery}
If $\la \zeronorm{\phi} > 1$ and $\la I_\phi(p,q) > 1$,
and if $\phi(x) > 0$ for all $x \le \zeronorm{\phi}$,
then
Algorithm~\ref{alg:full}
with parameters $\chi$ and $\delta$ chosen according to \eqref{eq:chi}--\eqref{eq:delta}
exactly recovers the communities in the $\GKBM_n(\la, \phi, p, q)$ model.
\end{thm}

\makebox[\linewidth]{%
\begin{minipage}{0.9\linewidth}
\begin{algorithm}[H]
\algrenewcommand\algorithmicrequire{\textbf{Input:}}
\algrenewcommand\algorithmicensure{\textbf{Output:}}
\caption{Exact recovery in the GKBM}

\begin{algorithmic}[1]

\Require
Node set $V \subset \torus$, adjacency matrix $\{A_{uv}\} \in \{0,1\}^{|V| \times |V|}$,
model parameters $\la, \phi, p, q$,
tuning parameters $\chi,\delta > 0$.

\Ensure Community membership vector 
$\{\hsigma_v\} \in \{-1,+1\}^{|V|}$

\State Partition $(-\frac{n}{2}, \frac{n}{2}]$ into segments of length $\chi \log n$

\label{line:block_division}

\State Let $B_1, \dots, B_J$ be the segments that contain at least $\delta \log n$ nodes, in the clockwise order

\State Assign $V_j \gets V \cap B_j$ for $j=1,\dots,J$ \label{line:shorthand}

\State Assign $Q_{uv} \gets \phi\Big( \frac{\| u - v \| }{\log n} \Big)$ for $u,v\in V$
\label{line:Qmatrix}

\State Choose an arbitrary reference node $u_0 \in V_1$ and set $\tsigma_{u_0} \gets +1$ \label{line:choose_u0}\tikzmark{top}

\For{$u \in V_1 \backslash \{u_0\}$}
\State $M(u,u_0,B_1) \gets \frac{(p+q)^2}{4}
\sum_{v\in V_1 \setminus \{u,u_0\}} Q_{uv}Q_{u_0v}$
\State $N_{u_0,u} \gets \sum_{v \in V_1} A_{u_0v} A_{uv}$
\label{line:count_common_nbrs}
\State Assign $\tsigma_u \gets +1$ if $N_{u_0,u} > M(u,u_0,B_1)$ and $\tsigma_u \gets -1$ otherwise \tikzmark{right}
\label{line:init_compare}
\EndFor
\tikzmark{bottom}

\For {$j=1, \dots, J-1$} \label{line:iter_start} \tikzmark{Prop_top}
\For{$u \in V_{j+1}$}
\State
$\tsigma_u \gets \sign \Big( \sum_{v \in V_{j}} \tsigma_v
\left[ A_{uv} \log \frac{p}{q} + (1-A_{uv}) \log \frac{1-p Q_{uv}}{1-q Q_{uv}} \right]
\Big)$
\label{line:prop_assignment}
\EndFor

\EndFor \label{line:iter_end} 

\State Assign $\tsigma_u \gets 0$ for $u \in V\setminus  \cup_{j} V_j$ 
\label{line:after_prop}\tikzmark{Prop_bottom}

\For{ $u \in V$} \tikzmark{Refine_top} \label{line:refine_start}
\State $\hsigma_u\gets \sign \Big( \sum_{v} \tsigma_v \left[
A_{uv} \log \frac{p}{q} + (1-A_{uv})
\log \frac{1-p Q_{uv}}{1-q Q_{uv}}
\right] \Big)$
\EndFor \label{line:refine_end}
\tikzmark{Refine_bottom}
\end{algorithmic}
\AddNote{top}{bottom}{right}{ Initialization}
\AddNote{Prop_top}{Prop_bottom}{right}{Propagation}
\AddNote{Refine_top}{Refine_bottom}{right}{Refinement}
\label{alg:full}
\end{algorithm}
\end{minipage}}

\vspace{1em}

Algorithm~\ref{alg:full} requires tuning parameters $\chi,\delta > 0$ as input.
The parameter $\chi$ sets the baseline resolution,
as the algorithm starts by dividing\footnote{One of the segments would have a size less than $\chi \log n$ and we take it to be not $\delta$-occupied. We work with the number of segments being $\frac{n}{\chi \log n}$ instead of $\lceil\frac{n}{\chi \log n}\rceil$. This does not affect the analysis. Here, $\lceil r \rceil$ denotes the smallest integer greater than or equal to $r$. }
the circle into segments of length $\chi \log n$.
The parameter $\delta$ is a threshold parameter for selecting dense
segments among the baseline segments, along which the algorithm propagates.
When proving Theorem~\ref{thm:exact_recovery},
we assume that these parameters satisfy
\begin{equation}
 0 < \chi < \frac{\la \zeronorm{\phi}-1}{2\la}.
 \label{eq:chi}
\end{equation}
and
\begin{equation}
 \label{eq:delta}
 0
 < \delta
 < \la \chi \left(1 - 2 \frac{\chi}{\zeronorm{\phi}} \right) h^{-1} \left( \frac12 + \frac{1}{2\la(\zeronorm{\phi} - 2\chi)} \right),
\end{equation}
where $h^{-1}(\cdot)$ is the inverse of $h(x) = x \log x + 1 - x$ on $(0,1)$.
These choices enable the algorithm to run on $\delta$-occupied segments in the subsequent steps,
thus enabling community recovery.
(Recall that a segment $B$ is \emph{$\delta$-occupied} if
$|V(B)| \ge \delta \log n$, where $V(B) = V \cap B$
denotes the set of nodes in $B$.)

\begin{figure}
\centering
\begin{tikzpicture}
\draw[thick] (0,0) circle(3cm);

\foreach \i in {0,15,30,...,345} {
    \draw[thick] (\i:3cm) -- (\i:2.7cm);
}

\def\denseAnglesA{0}
\def\denseAnglesB{30}
\def\denseAnglesC{60}
\def\denseAnglesD{75}
\def\denseAnglesE{90}
\def\denseAnglesF{135}
\def\denseAnglesG{165}
\def\denseAnglesH{180}
\def\denseAnglesI{225}
\def\denseAnglesJ{255}
\def\denseAnglesK{270}
\def\denseAnglesL{300}
\def\denseAnglesM{315}
\def\denseAnglesN{345}

\def\denseList{{0,30,60,75,90,135,165,180,225,255,270,300,315,345}}

\pgfmathsetseed{1991}

\foreach \i in {1,...,200} {
    \pgfmathsetmacro{\bias}{int(rand*100)}

    \ifnum\bias<70
        \pgfmathsetmacro{\blockIndex}{int(rand*14)}
        \edef\angle{
            \ifnum\blockIndex=0 0\else
            \ifnum\blockIndex=1 30\else
            \ifnum\blockIndex=2 60\else
            \ifnum\blockIndex=3 75\else
            \ifnum\blockIndex=4 90\else
            \ifnum\blockIndex=5 135\else
            \ifnum\blockIndex=6 165\else
            \ifnum\blockIndex=7 180\else
            \ifnum\blockIndex=8 225\else
            \ifnum\blockIndex=9 255\else
            \ifnum\blockIndex=10 270\else
            \ifnum\blockIndex=11 300\else
            \ifnum\blockIndex=12 315\else
            \ifnum\blockIndex=13
            345\fi\fi\fi\fi\fi\fi\fi\fi\fi\fi\fi\fi\fi\fi
        }
        \pgfmathsetmacro{\angle}{\angle + (rand+1)*7.5}
    \else
        \pgfmathsetmacro{\angle}{rand*360}
    \fi

    \pgfmathsetmacro{\x}{3*cos(\angle)}
    \pgfmathsetmacro{\y}{3*sin(\angle)}

    \pgfmathsetmacro{\cval}{int(rand*2)}
    \ifnum\cval=0
        \def\pointcolor{red}
    \else
        \def\pointcolor{blue}
    \fi

    \fill[\pointcolor] (\x,\y) circle (1.5pt);
}

\node at (97.5:2.4cm) {\( B_1 \)};
\node at (82.5:2.4cm) {\( B_2 \)};
\node at (67.5:2.4cm) {\( B_3 \)};

\node at (142.5:2.4cm) {\( B_J \)};
\node at (172.5:2.4cm) {\( B_{J-1} \)};

\def\denseCenters{{ 360, 345, 315, 300, 270, 255, 225, 180, 165, 135, 90, 75, 60, 30, 0 }}
\foreach \i [evaluate=\i as \aStart using {\denseCenters[\i]+7.5}, 
              evaluate=\i as \aEnd using {\denseCenters[\i+1]+7.5}] in {0,...,13} {
    \path coordinate (Pstart) at (\aStart:3cm);
    \path coordinate (Pend) at (\aEnd:3cm);

    \pgfmathsetmacro{\xoffset}{cos((\aStart+\aEnd)/2))}
    \pgfmathsetmacro{\yoffset}{sin((\aStart+\aEnd)/2))}

   \draw[thick,->] (Pstart) .. controls ($(Pstart)!0.5!(Pend) + (\xoffset,\yoffset)$) .. (Pend);
}
\draw[decorate, decoration={brace, amplitude=5pt,mirror}]
    (330:2.5cm) arc[start angle=330, end angle=315, radius=2.5cm];
    
\node at (322.5:1.8cm) {$\chi \log n$};

\end{tikzpicture}
\begin{minipage}{0.9\textwidth}
\caption{Division of $(-\frac{n}{2}, \frac{n}{2}]$ into segments of length $\chi \log n$. The $\delta$-occupied segments are denoted $B_j, j = 1,\cdots,J$.}
\label{fig:block_division}
\end{minipage}
\end{figure}

\begin{figure}
\centering
\begin{tikzpicture}

\draw[thick] (0, 0) -- (14, 0); 

\foreach \x in {0, 7, 14} {
    \draw[thick] (\x, 0) -- (\x, 0.2); 
}

%

\foreach \x in {0.5, 1.8, 3.2, 4.1, 5.3, 6.1, 2.3, 3.9, 3.3, 4.4} {
        \filldraw[red] (\x,0) circle (1mm);
}

\foreach \x in {1.1, 1.4, 2.1, 4.3, 5.5, 6.8, 2.7, 4, 3.1, 5.7} {
       \filldraw[blue] (\x,0) circle (1mm);
}

\fill[color=gray] (9.4, -0.1) rectangle (9.6, 0.1); 
\draw[color=black] (9.4, -0.1) rectangle (9.6, 0.1); 

\foreach \x in {0.5, 1.4, 2.1, 3.2, 4.1, 5.3, 6.1, 6.8, 4.4, 5.7} {
    \pgfmathsetmacro{\midpoint}{(\x + 9.5) / 2}

    
    \draw[black,->] 
        (\x , 0) .. controls (\midpoint, 1) .. (9.5, 0); 
}

\node at (3.5cm,-0.5cm) {\( B_j \)};
\node at (10.5cm,-0.5cm) {\( B_{j+1} \)};
\node at (9.5,-0.25) {\(u\)};
\end{tikzpicture}
\caption{Illustration of the propagation step to a subsequent segment.}
\label{fig:pro}
\end{figure}

The algorithm is divided into three phases: Initialization, Propagation and Refinement. The Initialization phase recovers communities within a single segment and runs in $O(\log^2 n)$ time.
Next, the Propagation phase evaluates a sum over (at most) the nodes in the previous segment for every node in the current segment (see Fig.~\ref{fig:pro}) and repeats this computation over $\delta$-occupied segments as shown in Fig.~\ref{fig:block_division},
yielding a runtime of $O(n \log n$).
Finally, the Refinement phase can run up to $O(n^2$) time, since the coefficients $Q_{uv}$ have to be evaluated for every pair of nodes in Line~\ref{line:Qmatrix}.
However, since the neighbourhood of every node contains $O(\log n$) nodes in the GKBM model when $\phi$ has a bounded support,
using more economical data structures (such as, adjacency lists) the computation of the constants $Q_{uv}$ and therefore the running time of the Refinement phase can be improved to $O(n\log n)$.
We conclude that Algorithm~\ref{alg:full} recovers the communities exactly in the GKBM model in $O(n \log n$) time, which is linear in the number of edges.

\begin{rem}
The key information quantity $I_\phi(p,q)$ 
appearing in Theorems~\ref{thm:impossibility} and~\ref{thm:exact_recovery}
can be interpreted as follows.
By writing
$
2( 1 - \sqrt{xy} - \sqrt{(1-x)(1-y)})
= (\sqrt{x}-\sqrt{y})^2 +  (\sqrt{1-x}-\sqrt{1-y})^2,
$
we see that
$I_\phi(p,q) = T_{1/2}( p\phi \, \| \, q\phi ) + T_{1/2}( 1-p\phi \, \| \, 1-q\phi )$
where
$T_{1/2}(f \| g) = \int_0^\infty ( \sqrt{f(x)} - \sqrt{g(x)} )^2 \, dx$
is the Tsallis divergence of order 1/2 between sigma-finite measures $f(x) dx$ and $g(x)dx$.
Because \Renyi divergences tensorise over product measures,
and \Renyi divergences between Poisson point pattern laws
are given by the Tsallis divergences between the associated intensity measures
\cite[Theorem 5]{Leskela_2024},
it follows that
\[
 I_\phi(p,q)
 \weq D_{1/2}( \cP_{p\phi} \otimes \cP_{1-p\phi} \, , \, \cP_{q\phi} \otimes \cP_{1-q\phi} ),
\]
where $D_{1/2}$ refers to the \Renyi divergence of order 1/2,
and $\cP_f$ denotes the law of a Poisson point pattern on $\R_+$
with intensity function $f$, and $\otimes$ indicates the product of probability measures.
\end{rem}

\section{Proof of impossibility}
\label{sec:impossibility}

This section provides 
the proof of Theorem~\ref{thm:impossibility}. Section \ref{sec:connectivity} justifies the condition $\la \zeronorm{\phi} <1$ for the impossibility of recovering communities by alluding to the connectivity of the underlying graph. In Section \ref{sec:info_metric}, we show that the condition $\la I_\phi(p,q) <1$ is the information-theoretic criterion that characterizes the inability to recover the two communities. Section \ref{sec:proof_impossibility} brings everything together to prove Theorem~\ref{thm:impossibility}.

\subsection{Connectivity criterion for community recovery}
\label{sec:connectivity}
To analyse connectivity, we may couple the model 
$\GKBM_n(\la, \phi, p, q)$ with $\GKBM_n(\la, \phi, 1, 1)$
as follows:
\begin{enumerate}
\item Sample $( V,\{\sigma_v\},\{A'_{uv}\})$ from $\GKBM_n(\la, \phi, 1, 1)$.
\item Sample a symmetric random matrix $\{A''_{uv}\}$ with independent upper triangular entries
so that $A''_{uv}=1$ with probability $P_{\sigma_u \sigma_v}$ as in \eqref{eq:InteractionKernel}.
\item Let $A_{uv} = A'_{uv} A''_{uv}$.
\end{enumerate}
Then $(V,\{\sigma_v\},\{A_{uv}\})$ is distributed according to $\GKBM_n(\la, \phi, p, q)$,
and the graph $G$ with adjacency matrix $\{A_{uv}\}$ is an edge-percolated version of
the graph $G'$ with adjacency matrix $\{A'_{uv}\}$.  In particular, $G$ is a subgraph of $G'$.
Furthermore, we note that $(V, \{A'_{uv}\})$ is an instance of a soft random geometric graph
\cite{penrose2016connectivity,Wilsher_Dettmann_Ganesh_2023}.

In \cite{Wilsher_Dettmann_Ganesh_2023}, the authors show that if $\la \|\phi\|_1 <\frac{1}{2}$, then there exists at least one isolated node. Here $\|\phi\|_1 = \int_0^\infty \phi(x)dx \le  \zeronorm{\phi}$ for kernels with a bounded support. The condition $\la \|\phi\|_1 <\frac{1}{2}$ characterizes the graphs that have an isolated node, and the condition $\la \zeronorm{\phi} <1$ provides a sufficient condition for the graph to be disconnected. The former is more restrictive as compared to the latter, since a graph could be disconnected without having an isolated node. The reason for disconnection is uncrossed gaps in one dimension \cite{Wilsher_Dettmann_Ganesh_2023} as opposed to isolated nodes which are prevalent in higher dimensions \cite{penrose2016connectivity}.

\begin{lem}
If $\la \zeronorm{\phi} < 1$, then the graph $G'$ sampled from 
$\GKBM_n(\la, \phi, 1, 1)$ is disconnected with high probability as $n \to \infty$.
\label{lem:connectivity_softrgg}
\end{lem}
\begin{proof}
Denote $\ka = \zeronorm{\phi}$.
Divide the space $(-\frac{n}{2}, \frac{n}{2}]$ into segments $D_i$ for $i=1,\cdots, \lceil\frac{n}{\ka \log n}\rceil$ of length $\ka \log n$ each. Denote the number of segments by $b = \lceil\frac{n}{\ka \log n}\rceil$. 
Notice that there are no edges possible between non-adjacent segments since the support of the kernel is at most $\ka \log n$. Thus, if two empty segments are non-adjacent with non-empty segments between them, the graph $G'$ has at least two disjoint connected components.

Denote by $\ga$ the probability that
a particular segment $D_i$ is empty.
Because the number of points in $D_i$ is Poisson-distributed with mean $\la \ka \log n$, we find that
\begin{equation}
 \ga
 \weq e^{-\la \ka \log n}
 \weq n^{-\la \ka}.
 \label{eq:empty_block_prob}
\end{equation}
Let $\cD$ be the event that there are at least two empty segments that are non-adjacent and separated by (at least) a non-empty segment. Let $\cY_k$ be the event of having exactly $k$ empty segments with at least two empty non-adjacent segments and separated by a non-empty segment.
Then
\begin{align*}
\P(\cD) & \weq \sum_{k=2}^{b-1} \P(\cY_k) \weq \sum_{k=2}^{b-1} \Bigg(\binom{b}{k} -b\Bigg) \ga^k \big(1-\ga\big)^{b-k} \\
&\wge \sum_{k=1}^{b} \binom{b}{k} \ga^k \big(1-\ga \big)^{b-k} - b\big(1-\ga\big)^{b}\sum_{k=1}^{b} \ga^k \big(1-\ga\big)^{-k}\\
&\wge 1-(1-\ga )^{b} -\frac{b\ga}{1-2\ga}(1-\ga )^{b},
\end{align*}
where the last step is obtained by evaluating the binomial and geometric sums. Since $\ga = n^{-\la \ka } \le \frac{1}{4}$ for sufficiently large $n$, we have that $\frac{1}{1-2\ga} \le 2$. Then, we obtain  
\begin{align*}
\P(\cD)
&\ge 1-(1-\ga)^{b} (1+2b\ga)\\
& \ge  1- e^{-\ga b}(1+2b\ga)\\
&\ge 1-e^{-\frac{n^{1-\la \ka}}{\ka \log n}} \Big[1+ \frac{2n^{1-\la \ka}}{\ka \log n}\Big].
\end{align*}
If $\la \ka <1$, then $\P(\cD) \rightarrow 1$ as $n\rightarrow \infty$.  
\end{proof}

\subsection{Information-theoretic criterion for cluster separation}
\label{sec:info_metric}
We begin this subsection by providing some preliminaries on constructing Palm versions of the $\GKBM_n(\la,\phi,p,q)$ model in Section \ref{sec:palm}. Section \ref{sec:map_analysis} analyzes the Maximum-A-Posteriori (MAP) estimate of the ground-truth communities and establishes conditions for it to fail. The conditions are in terms of the first and second moment of a random variable which are analyzed in Section \ref{sec:first_moment} and Section \ref{sec:second_moment} respectively.

\subsubsection{Palm versions and probabilities}
\label{sec:palm}

\begin{defn}
The Palm version of the $\GKBM_n(\la,\phi,p,q)$ model given points $x_1,\dots,x_r$ in the interval $\torus$ is generated using the following procedure:
\begin{enumerate}
    \item Sample a finite node set $V \subset \torus$ from a homogeneous Poisson point pattern with intensity~$\la$. 
    \item Define $V^{x_1,\dots,x_r} = V \cup \{x_1,\dots,x_r\}$.
    \item Assign each $v \in V^{x_1,\dots,x_r}$ a community membership label $\sigma_v \in \{-1,+1\}$ uniformly at random.
    \item Sample a symmetric random matrix $(A_{uv})_{u,v \in V^{x_1,\dots,x_r}}$ with independent entries above the diagonal sampled from the Bernoulli distribution with success probability $P_{\sigma_u \sigma_v} Q_{uv}$, where $P,Q$ are defined in \eqref{eq:InteractionKernel}. 
\end{enumerate}
The triple $(V^{x_1,\dots,x_r}, (\sigma_v)_{v \in V^{x_1,\dots,x_r}}, (A_{uv})_{u,v \in V^{x_1,\dots,x_r}})$ is a sample from the $\GKBM_n^{x_1,\dots,x_r}(\la,\phi,p,q)$ model. The corresponding probability measure, referred to as the Palm probability, is denoted as $\P^{x_1,\cdots,x_r}$ and the expectation with respect to it is denoted using $\E^{x_1,\cdots,x_r}$.
\end{defn}

\subsubsection{Maximum-A-Posteriori (MAP) estimate} 
\label{sec:map_analysis}
For a finite node set $V \subset \torus$, let $\P_{V}(\cdot)$ denote the distribution of $(\bsigma,\bA)$ conditioned on the locations $V$. Define the MAP estimate of the node communities as 
\begin{equation}
    \map = \argmax_{\bsigma'}\ \P_{V}(\bsigma' \, | \, \bA),
    \label{eq:MAP}
\end{equation}
where ties are broken arbitrarily.
The MAP estimate is Bayes optimal in the sense that
\begin{equation}
\P_{V}( \map \not\in \{\bsigma,-\bsigma\} ) \weq \inf_{t \in \cA(V,\bA)} \P_V(t(V,\bA) \not\in \{\bsigma,-\bsigma\}),
\label{eq:MAP_Bayes_opt}
\end{equation}
where $\cA(V,\bA)$ is the set of all measurable functions of $V$ and $\bA$ (see Appendix \ref{app:MAP_Bayes_opt}). In particular, if there exists an estimate that can recover the ground-truth communities exactly, then the MAP estimate recovers the communities exactly. However, if the MAP estimate in \eqref{eq:MAP} is not unique, or not equal to the ground-truth community vector $\bsigma$ up to a global sign flip, then there is no hope to recover the communities exactly. Thus, in order to obtain conditions when community recovery is not possible, it suffices to show that the MAP estimate is not unique. In the following, we introduce a few notations and terminologies that will be useful to analyze the MAP estimate.
\begin{defn}[Visibility set]
\label{defn:visibility_set}
For a node $u \in V$, its visibility set
\[
\cV(u) := \{v \in V\setminus \{u\}  : \|v-u\| \le \zeronorm{\phi} \log n\}.
\]
\end{defn}
\noindent Let $\Ber(p)(x) = p^x(1-p)^{1-x}$. Define 
\[
\cL_u(k,\bsigma_{\sim u},V,\bA) := \sum_{v\in \cV(u)}\log \big(\Ber(P_{k,\sigma_v}Q_{uv})(A_{uv})\big),
\]
the \emph{log-likelihood} of the community membership of node $u$ relative to the community membership $\bsigma_{\sim u}:= \{\sigma_v: v\in V \setminus \{u\}\}$ of the other nodes and the adjacency matrix $\bA$. Note that it suffices to restrict the sum to the nodes in the visibility set of $u$ since the kernel $\phi$ has a bounded support. For any $u\in V$, define the event
\begin{equation}
\cE_{u}^V
=\bigg\{(\bsigma,\bA) \in \{\pm1\}^V \times \{0,1\}^{V\times V} : \frac{\cL_u(-\sigma_u,\bsigma_{\sim u},V,\bA)}{\cL_u(\sigma_u,\bsigma_{\sim u},V,\bA)}\ge 1\bigg\}, \quad \text{ and let } \quad \xi_u^V = \1\{(\bsigma,\bA)\in \cE_u^V\}.
\label{eq:ubad}
\end{equation}
The following lemma provides a sufficient condition for the non-uniqueness of the MAP estimate.
\begin{lem}
\label{lem:compMAPnec}
Let $\cE := \cup_{u\in V} \cE_u^V$, where $\cE_u^V$ is defined in \eqref{eq:ubad}. Then 
\[
\P_{V}(\cE) \wle \P_{V}( \map \text{ is not unique up to a global sign flip or }\map \not \in \{\bsigma,-\bsigma\}).
\]
\end{lem}
\begin{proof}
Firstly, note that the event $\cE_u^V$ can equivalently be written as
\[
\cE_{u}^V=\bigg\{(\bsigma,\bA) \in \{\pm1\}^V \times \{0,1\}^V : \frac{\P_V( -\sigma_u|\bA,\bsigma_{\sim u})}{\P_V( \sigma_u|\bA,\bsigma_{\sim u})}\ge 1\bigg\}.
\]
Indeed, using Bayes' theorem, it holds that 
\[
\P_V( \sigma_u'|\bA,\bsigma_{\sim u})=   \frac{\P_V( \bA|\sigma_u',\bsigma_{\sim u}) \P_V(\sigma_u'|\bsigma_{\sim u})}{\P_V(\bA|\bsigma_{\sim u})} = \frac{\P_V( \bA|\sigma_u',\bsigma_{\sim u})}{2 \,\P_V(\bA|\bsigma_{\sim u})}.
\]
Since $\log \P_V( \bA|\sigma_u',\bsigma_{\sim u}) = \sum_{v\in \cV(u)} \log \Ber (P_{u,v} Q_{\sigma_u',\sigma_v})(A_{uv})$, the condition $\log \P_V( -k|\bA,\bsigma_{\sim u}) \ge \log \P_V( k|\bA,\bsigma_{\sim u})$ is the same as 
$\frac{\cL_u(-\sigma_u,\bsigma_{\sim u},V,\bA)}{\cL_u(\sigma_u,\bsigma_{\sim u},V,\bA)}\ge 1$.
Therefore,
\begin{align*}
\cE
&= \{(\bsigma,\bA): \exists u \text{ such that }
\P_{V}(-\sigma_u \ | \ \bA,\bsigma_{\sim u}) \ge  \P_{V}(\sigma_u \ | \ \bA,\bsigma_{\sim u}) \} \\
&\subseteq \{(\bsigma,\bA): \exists \bar{\bsigma}\not \in \{\bsigma,-\bsigma\} \text{ such that }
\P_{V}(\bar{\bsigma} \ | \ \bA) \ge  \P_{V}(\bsigma \ | \ \bA) \} \\
&= \{(\bsigma,\bA): \map \text{ is not unique up to a global sign flip or } \map \not \in \{\bsigma,-\bsigma\} \}.
\end{align*}
The second step above is obtained by taking
$\bar{\bsigma} = (-\sigma_u,\bsigma_{\sim u})$. This concludes the proof of the lemma.
\end{proof}
Let $Z = \sum_{u\in V} \xi_u^V$. For $(V,\bsigma,\bA)$ sampled from $\GKBM_n(\la,\phi,p,q)$ model, we say that node $u$ is bad if $\xi_u^V=1$. From Lemma~\ref{lem:compMAPnec}, it is clear that there is no unique MAP estimate if $Z \ge 1$. The following lemma provides conditions when there exists at least one bad node. 
\begin{lem}
\label{lem:first_second_moment_cond}
Let $\cE_0 := \{(V,\bsigma,\bA): (\bsigma,\bA_{0:})\in \cE_0^V\}$. If 
\begin{equation}
\limsup_{n \to \infty} \frac{ \int_{ (-\frac{n}{2}, \frac{n}{2}]}
\E^{0,y} \Big[ \xi^{0y}_0 \xi^{0y}_y \Big] dy}{n \P^0(\cE_0)^2}
\wle 1,
\label{eq:secondmomentcond}
\end{equation}
and
\begin{equation}
\lim_{n \to \infty} n \P^0(\cE_0)
\weq \infty,
\label{eq:firstmomentcond}
\end{equation}
then there exists at least one bad node i.e., $Z\ge 1$ with high probability.
\end{lem}
\begin{proof}
    Using the second moment method
\begin{equation}
\label{eq:bad_nodes}
\P(Z \ge 1)
\wge 1-\frac{\text{Var}(Z)}{(\E[Z])^2}
\weq 2 - \frac{\E[Z^2]}{(\E[Z])^2}.
\end{equation}
The Mecke equation from Theorem~\ref{thm:uni_mecke} along with the stationarity of the generated point process now yields
\begin{equation}
\E Z \weq \E \Big[\sum_{u \in V} \xi_{u}^V\Big] \weq \E \Big[\sum_{u \in V} \P_V(\cE_u^V)\Big] = \la n \P^0(\cE_0).
\label{eq:meanZ}
\end{equation}
The reader is referred to Appendix \ref{sec:ppp} for a brief discussion of theorems concerning Palm versions of Poisson point processes.

By writing $Z^2  = (\sum_{u\in V} \xi_u^V)^2 = \sum_u \xi_u^V + \sum_u \sum_{u'\ne u} \xi_u^V\xi_{u'}^V$, we find that
\begin{equation}
\E Z^2
\weq \E Z + \sum_u \sum_{u'\ne u} \P( \cE_u^V \cap \cE_{u'}^V )
\weq \E Z + \E \bigg[\sum_u \sum_{u'\ne u} \P_V( \cE_u^V \cap \cE_{u'}^V )\bigg].
\label{eq:Palm_second_moment}
\end{equation}
Therefore, using the bivariate Mecke equation from Theorem~\ref{thm:multi_mecke} and exploiting the stationarity of the generated point process, we get
\begin{align*}
\E Z^2
&\weq \E Z + \la^2 \int_{ (-\frac{n}{2}, \frac{n}{2}]} \int_{ (-\frac{n}{2}, \frac{n}{2}]}
\E \Big[ \P_{V\cup\{x,y\}}\big(\cE_x^{V\cup\{x,y\}} \cap \cE_y^{V\cup\{x,y\}}\big) \Big] dx \, dy \\
&\weq \E Z + \la^2 \int_{ (-\frac{n}{2}, \frac{n}{2}]} \int_{ (-\frac{n}{2}, \frac{n}{2}]}
\P^{0,y} \big( \cE_0^{0,y}\cap \cE_y^{0,y} \big) dx \, dy,
\end{align*}
where $\cE_u^{x,y} := \Big\{(V,\bsigma,\bA): (\bsigma,\bA_{u:})\in \cE_u^{V\cup \{x,y\}}\Big\}$. By letting $\xi^{0y}_u = \1_{\cE_u^{0,y}}$ for $u\in\{0,y\}$,  we have that
\begin{equation}
\frac{\E Z^2}{(\E Z)^2}
= \frac{1}{\E Z }
+ \frac{ \int_{ (-\frac{n}{2}, \frac{n}{2}]}
\E^{0,y} \Big[ \xi^{0y}_0 \xi^{0y}_y \Big] dy}{n \P^0(\cE_0)^2}
\label{eq:terms}
\end{equation}
From \eqref{eq:firstmomentcond} and \eqref{eq:meanZ}, the first term on the RHS in \eqref{eq:terms} tends to $0$ as $n\to \infty$, and the second term is at most equal to $1$ from \eqref{eq:secondmomentcond}. Therefore,  $\limsup_{n \to \infty} \frac{\E[Z^2]}{(\E[Z])^2} \le 1$ from \eqref{eq:terms}. Consequently, from \eqref{eq:bad_nodes}, there exists a bad node with high probability.
\end{proof}

In the following two subsections, we will show that if $\la I_\phi(p,q) <1$, then \eqref{eq:secondmomentcond} and \eqref{eq:firstmomentcond} hold.

\subsubsection{First moment analysis}
\label{sec:first_moment}
In this subsection, we show \eqref{eq:firstmomentcond}. For two distinct nodes $u,v\in V\cup\{0\}$, define
\begin{equation}
R_{uv} = \sigma_v\bigg[A_{uv}\log \Big(\frac{q}{p}\Big)+ \big( 1-A_{uv} \big)\log\bigg(\frac{1-qQ_{uv}}{1-pQ_{uv}}\bigg)\bigg].
\label{eq:ruv}
\end{equation}
To be concise, if $u$ is the origin we write $R_v \equiv R_{0v} $ and $\cL_u(k) = \cL_u(k,\bsigma_{\sim u},V,\bA)$ for $k\in \{-1,1\}$. Recall that $\cE_0 = \{(V,\bsigma,\bA): (\bsigma,\bA_{0:})\in \cE_0^V\}$.

\begin{prop}
\label{prop:origin_bad}
For all $\la >0$, $p,q \in(0,1)$, and geometric kernel $\phi$
with a bounded normalised interaction range $\zeronorm{\phi}$ satisfying $\la \zeronorm{\phi} >1$,  if $(V^{(n)}, \bsigma^{(n)}, \bA^{(n)}) \sim \GKBM_n(\la, \phi, p, q)$ and $\la I_{\phi}(p,q) <1$, then
\[
\lim_{n \to \infty}  n \P^0(\cE_0)
\weq \infty.
\]
\end{prop}
\begin{proof}
Conditioning on the community of the node at the origin,
\begin{equation}
\P^0(\cE_0)
= \sum_{k \in \{\pm 1\}} \frac12 \P^0\bigg(\frac{\cL_0(-k)}{\cL_0(k)} \ge 1\ \Big|\  \sigma_0 = k\bigg).
\label{eq:origin_bad}
\end{equation}
Consider the term with $\sigma_0 = +1$. Then
\[
\bigg\{\frac{\cL_0(-1)}{\cL_0(1)} \ge 1 \bigg\} \weq \bigg\{\sum_{v\in V\backslash \{0 \}} \log \frac{\Ber(P_{-1, \sigma_v}Q_{0v})(A_{0v})}{\Ber(P_{1,\sigma_v}Q_{0v})(A_{0v})} \ge 0\bigg\}
\]
Since the edges $A_{0v}$ are generated with the node at the origin being in the $+1$ community, for nodes with $\sigma_v=-1$, the log-likelihood ratio evaluates to
\begin{align*}
\log \frac{\Ber(P_{-1, \sigma_v}Q_{0v})(A_{0v})}{\Ber(P_{1,\sigma_v}Q_{0v})(A_{0v})} 
&= 
\log \bigg(\Big(\frac{pQ_{0v}}{qQ_{0v}}\Big)^{A_{0v}}\Big(\frac{1-pQ_{0v}}{1-qQ_{0v}}\Big)^{1-A_{0v}}\bigg)\\
&=
A_{0v}\log \Big(\frac{pQ_{0v}}{qQ_{0v}}\Big)+ \big( 1-A_{0v} \big)\log\bigg(\frac{1-pQ_{0v}}{1-qQ_{0v}}\bigg).
\end{align*}
A similar expression with a negative sign is obtained when $\sigma_v=+1$. Combining the two, we obtain
\begin{align}
\P^0\Big( \frac{\cL_0(-1)}{\cL_0(1)} \ge 1\Big|\sigma_0=+1\Big)& \weq \P^0\Big( \sum_{v\in \cV(0)}\sigma_v\Big[A_{0v}\log\frac{q}{p}+(1-A_{0v}) \log \frac{1-qQ_{0v}}{1-pQ_{0v}}\Big]\ge 0 \,\Big|\,\sigma_0=+1\Big) \nonumber\\
&\weq \sum_{m=0}^\infty \P(|\cV(0)|=m) \ \P^0\Big( \sum_{v=1}^m R_v \ge 0 \,\Big|\,\sigma_0=+1\Big)
\label{eq:map_Poisson_sum}
\end{align}
The same expression is obtained when $\sigma_0=-1$. Note that $Q_{0v} = \phi\Big(\frac{\|v\|}{\log n}\Big)$. In the following, we obtain a large deviation bound for the second probability term on the RHS. 
We proceed by first computing the moment generating function of $R_v$. To indicate the conditioning event $\sigma_0=+1$, we use the notation $\P^0_+, \E^0_+$ for the conditional (Palm) probability and expectation.

Let $\zeronorm{\phi} = \ka$. Note that given the number of points in the visible set of the origin, each node is distributed uniformly within $[-\ka \log n,\ka \log n]$, assigned community $\{+1,-1\}$ independently with equal probability, and an edge is drawn to the origin based on its community and location as in \eqref{eq:InteractionProduct}. Since the same procedure is performed independently for each of
the $m$ nodes, each of the $R_v$ variables has the same distribution. Moreover, $\big\{R_v, v=1,\cdots,m\big\}$ are all independent. Integrating out the community and location of node $v$, we have that
\[ 
\E^0_+\Big[\exp(t R_v)\Big] = \frac{1}{4\ka \log n} \int_{-\ka \log n}^{\ka \log n}  \Big[\E^0_+\big[\exp(t R_v)\big| \sigma_v=-1 ,v\big]+ \E^0_+\big[\exp(t R_v)\big| \sigma_v=+1,v\big]\Big]dv.
\]
Recall that $Q_{0v} = \phi\Big(\frac{\|v\|}{\log n}\Big)$. Since $\sigma_0 = +1$, the first expectation evaluates to 
\begin{align*}
\E^0_+\big[\exp(t R_v)\big| \sigma_v=-1,v\big]
&= \E^0_+\bigg[\exp\bigg(t\sigma_v\Big[A_{0v}\log\frac{q}{p}+(1-A_{0v}) \log \frac{1-qQ_{0v}}{1-pQ_{0v}}\Big] \bigg)\bigg| \sigma_v=-1,v \bigg]\\
&= \E^0_+\bigg[\exp\bigg(t\Big[A_{0v}\log\frac{p}{q}+(1-A_{0v}) \log \frac{1-pQ_{0v}}{1-qQ_{0v}}\Big] \bigg) \bigg]\\
&=(pQ_{0v})^t (qQ_{0v})^{1-t}+(1-pQ_{0v})^t(1-qQ_{0v})^{1-t}
\end{align*}
and similarly 
\[ \E^0_+\big[\exp(t R_v)\big| \sigma_v=+1,v\big] = (qQ_{0v})^t (pQ_{0v})^{1-t}+(1-qQ_{0v})^t(1-pQ_{0v})^{1-t}.\]
Therefore, we obtain
\begin{align}
\E^0_+\Big[\exp(t R_v)\Big]
&=\frac{1}{4\ka \log n} \int_{-\ka \log n}^{\ka \log n} \Big[(pQ_{0v})^t (qQ_{0v})^{1-t}+(1-pQ_{0v})^t(1-qQ_{0v})^{1-t} \nonumber\\ 
&\hspace{3.5cm}+(qQ_{0v})^t (pQ_{0v})^{1-t}+(1-qQ_{0v})^t(1-pQ_{0v})^{1-t}\Big] dv \nonumber\\
&=\frac{1}{2\ka \log n} \int_{0}^{\ka \log n} \Big[(pQ_{0v})^t (qQ_{0v})^{1-t}+(1-pQ_{0v})^t(1-qQ_{0v})^{1-t}\nonumber\\ 
&\hspace{3.5cm}+(qQ_{0v})^t (pQ_{0v})^{1-t}+(1-qQ_{0v})^t(1-pQ_{0v})^{1-t}\Big] d\|v\|
\label{eq:mgf1}
\end{align}
Putting $\frac{\|v\|}{\log n} = z$, we get
\begin{align}
\E^0_+\Big[\exp(t R_v)\Big]
&= \frac{1}{2\ka} \int_0^{\ka } \Big[(p\phi(z))^t (q\phi(z))^{1-t}+(1-p\phi(z))^t(1-q\phi(z))^{1-t} \nonumber\\
&\hspace{3.5cm}+(q\phi(z))^t (p\phi(z))^{1-t}+(1-q\phi(z))^t(1-p\phi(z))^{1-t}\Big] dz.
\label{eq:mgf2}
\end{align}
Since the above expression is symmetric with respect to $p,q$ and $t,1-t$, the integrand is symmetric around $t=\frac{1}{2}$. Thus, the moment generating function is symmetric around $\frac{1}{2}$ within $t\in[0,1]$. Since the moment generating function is convex in $t$, it is minimized when $t=\frac{1}{2}$. Therefore, the cumulant generating function defined as $\La(t) = \log \E^0_+\Big[\exp(t R_v)\Big]$ is minimized at $t=\frac{1}{2}$. The minimum value equals 
\begin{equation}
\La\Big(\frac{1}{2}\Big)= \log \Big(\frac{1}{\ka} \int_0^\ka \Big[\sqrt{pq}\phi(z)+\sqrt{(1-p\phi(z))(1-q\phi(z))}\Big] dz\Big).
\label{eq:min_mgf}
\end{equation}
From Cram\'{e}r's theorem, for $\alpha >\E ^0_+[R_v]$,
\[
\lim_{m \to \infty} \frac{1}{m} \log \P^0_+\Big(\sum_{v=1}^m R_v \ge \alpha m \Big)
= -\La^*(\alpha).
\]
where $\La^*(\alpha)$ is the Fenchel--Legendre transform of $\La(t)$ defined as 
$\La^*(\alpha) = \sup_{t \in \R}\big[t\alpha-\La(t)\big]$.
For $\alpha = 0$, this evaluates to $\La^*(0) = -\inf_{t\in\R} \La(t)$.
Note that, using a similar procedure as in \eqref{eq:mgf1} and \eqref{eq:mgf2},
the expected value of $R_v$ can be evaluated to be
\begin{align*}
 \E^0_+[R_v] 
 \weq \frac{1}{4\ka \log n} \int_{-\ka \log n}^{\ka \log n}
   \bigg[pQ_{0v} \log \frac{qQ_{0v}}{pQ_{0v}}+(1-pQ_{0v})\log \frac{1-qQ_{0v}}{1-pQ_{0v}}\bigg]dv
 \ < \ 0,
\end{align*}
since the integrand is the negative of the KL divergence between two Bernoulli distributions with parameter $pQ_{0v}$ and $qQ_{0v}$.
Thus Cram\'{e}r's theorem is applicable with $\alpha = 0$.
Since $\La(\cdot)$ is convex, the infimum of the cumulant generating function $\La(t)$ 
is achieved at $t=\frac{1}{2}$ and we obtain for any $\ga>0$ and a large enough $m$
\[
\bigg| \frac{1}{m} \log\P^0_+\Big(\sum_{v=1}^m R_v \ge 0 \Big) - \La \Big(\frac{1}{2}\Big)\bigg|
\wle \ga.
\]
A similar large deviation bound is obtained with $\P_+^0(\cdot)$ replaced by $\P_-^0(\cdot)$. 
Using the above equation in \eqref{eq:map_Poisson_sum}, for any $\ga>0$ there exists an $m_0$ large enough such that ,
\[
 \P^0\Big( \frac{\cL_0(-1)}{\cL_0(1)}
 \wge 1\Big|\sigma_0=+1\Big)
 \wge \sum_{m=m_0}^\infty \P(|\cV(0)|=m) \exp\Big(m\Big(\La\Big(\frac{1}{2}\Big)-\ga\Big)\Big). 
\]
Including the initial terms of the summation, we obtain
\begin{align*}
 \P^0\Big( \frac{\cL_0(-1)}{\cL_0(1)} 
 &\wge 1 \Big|\sigma_0=+1\Big)\\
 &\wge  \sum_{m=0}^\infty \P(|\cV(0)|=m)\exp\Big(m\Big(\La\Big(\frac{1}{2}\Big)-\ga\Big)\Big) - \sum_{m=0}^{m_0} \P(|\cV(0)|=m)\exp\Big(m\Big(\La\Big(\frac{1}{2}\Big)-\ga\Big)\Big)\\
 &\wge \sum_{m=0}^\infty \P(|\cV(0)|=m)
  \exp\Big(m\Big(\La\Big(\frac{1}{2}\Big)-\ga\Big)\Big) - \P(|\cV(0)| \le m_0).
\end{align*}
Since $|\cV(0)|$ is a Poisson random variable with mean $2 \la \ka \log n$,
the first term is its moment generating function evaluated at
$\La(\frac{1}{2})-\ga$.
For a random variable $X\sim \Poi(\mu)$,
$\E[e^{tX}] = \exp\big(\mu (e^t-1)\big)$. 
This yields
\begin{align*}
 \P^0\Big( \frac{\cL_0(-1)}{\cL_0(1)} \ge 1\Big|\sigma_0=+1\Big)
 &\wge \exp\Big[2 \la \ka \log n \big(e^{\La(\frac{1}{2})-\ga}-1\big) \Big]-\P(|\cV(0)| \le m_0)\\ 
 &\weq n^{2 \la \ka \big(e^{\La(\frac{1}{2})-\ga}-1\big)} -\P(|\cV(0)| \le m_0).
\end{align*}
A similar computation results in 
\[
 \P^0\Big( \frac{\cL_0(+1)}{\cL_0(-1)} \ge 1\Big|\sigma_0=-1\Big)
 \wge n^{2 \la \ka \big(e^{\La(\frac{1}{2})-\ga}-1\big)} -\P(|\cV(0)| \le m_0),
\]
which together when substituted in \eqref{eq:origin_bad} yields 
\begin{equation}
 \label{eq:e0withm0}
 \P^0(\cE_0 )
 \wge n^{2 \la \ka \big(e^{\La(\frac{1}{2})-\ga}-1\big)} -\P(|\cV(0)| \le m_0),
\end{equation}
Since $e^{\La(\frac{1}{2})} =1-\frac{I_{\phi}(p,q)}{2\ka}$ from \eqref{eq:min_mgf}, and 
\[
\lim_{\ga \to 0} 2 \la \ka \big(e^{\La(\frac{1}{2})-\ga}-1\big) \weq 2 \la \ka \big(e^{\La(\frac{1}{2})}-1\big) \weq -\la I_{\phi}(p,q)>-1,
\]
taking $\beta = \frac{1+2 \la \ka \big(e^{\La(\frac{1}{2})}-1\big)}{3} > 0$, we can choose a $\ga$ small enough such that 
\begin{equation}
2 \la \ka \big(e^{\La(\frac{1}{2})-\ga}-1\big) > -1+2\beta.
\label{eq:beta_term}
\end{equation}
Using \eqref{eq:beta_term} in \eqref{eq:e0withm0}, we obtain 
\begin{equation}
\P^0(\cE_0 )
\wge n^{-1+2\beta} -\P(|\cV(0)| \le m_0).
\label{eq:e0withbetam0}
\end{equation}
The latter term in \eqref{eq:e0withbetam0} is the tail probability of a Poisson random variable whose mean is $2\la \ka \log n$. Let $c<2\beta$ be a constant and $m_0 = \ga' \log n$. We will show that for an appropriate choice of $\ga'$, $\P(|\cV(0)| \le \ga' \log n) \le n^{-1+c}$. Indeed,
using a standard Chernoff bound (Lemma~\ref{lem:conc_poisson_new}), we obtain
\[
\P(|\cV(0)| \le m_0) 
\weq \P(|\cV(0)| \le \ga' \log n)
\wle n^{-2 \la \ka h\big(\frac{\ga'}{2 \la \ka}\big)},
\]
where
$h(x) = x \log x + 1 - x$.
Note that $\lim_{\ga'\to 0}h\big(\frac{\ga'}{2 \la \ka}\big) = 1$, and $h\big(\frac{\ga'}{2 \la \ka}\big)$ is strictly decreasing for $0<\ga'<2\la \ka$. Since $2\la \ka \ge 1$, for sufficiently small $\ga'$, $2 \la \ka h\big(\frac{\ga'}{2 \la \ka}\big) >1-c$ and we obtain $\P(|\cV(0)| \le m_0) \le n^{-1+c}$. Substituting in \eqref{eq:e0withbetam0}, since $c<2\beta$, we can write
\[
\P^0(\cE_0 )
\wge n^{-1+\beta},
\]
where $\beta>0$ whenever $\la I_{\phi}(p,q)<1$.
\end{proof}

\subsubsection{Second moment analysis}
\label{sec:second_moment}
\begin{prop}
For all $\la >0$, $p, q \in (0,1)$, and geometric kernels $\phi$ with a bounded normalised interaction range $\zeronorm{\phi}$, if $\la I_\phi(p,q) < 1$, then the graph $G_n \sim \GKBM_n(\la, \phi, p, q)$ satisfies condition \eqref{eq:secondmomentcond}.
\label{prop:second_momentz}
\end{prop}

\begin{proof}
With $\ka = \zeronorm{\phi}$ as defined in \eqref{eq:kappa}, we have
\begin{align*}
\int_{(-\frac{n}{2}, \frac{n}{2}]}
\E^{0,y} \Big[ \xi^{0y}_0 \xi^{0y}_y \Big] dy
&\weq \int_{B\left(0,2\ka\log n\right)} \E^{0,y} \Big[ \xi^{0y}_0 \xi^{0y}_y \Big] dy
+ \int_{(-\frac{n}{2}, \frac{n}{2}] \cap B \left(0,2\ka\log n\right)^{\text{c}}}
\E^{0,y} \Big[ \xi^{0y}_0 \xi^{0y}_y \Big] dy\\
&\wle \int_{B\left(0,2\ka\log n\right)}
\E^{0,y} \Big[\xi^{0y}_0 \Big] dy
+ \int_{(-\frac{n}{2}, \frac{n}{2}] \cap B\left(0,2\ka\log n\right)^{\text{c}}}
\E^{0,y} \Big[ \xi^{0y}_0 \xi^{0y}_y \Big] dy.
\end{align*}
Owing to spatial independence of the Poisson point process and our choice of $\ka$, for two nodes at $x$ and $y$ that are at least a distance of $d(x,y) > 2\left(\ka\log n\right)$ apart, we have 
\begin{align*}
\E^{x,y} \Big[\xi^{xy}_x \xi^{xy}_y\Big]
\weq \E^{x} \Big[\xi^{V\cup \{x\}}_x\Big] \, \E^{y} \Big[\xi^{V\cup \{y\}}_y\Big]
\weq \Big(\E^{x} \Big[\xi^{V\cup \{x\}}_x\Big]\Big)^2.
\end{align*}
where the last equality is due to translation invariance on the torus. Using $\xi_0^0 = \xi_0^{V\cup\{0\}}$, we obtain
\begin{align*}
\frac{\int_{y \in (-\frac{n}{2}, \frac{n}{2}]} \E^{0,y} \Big[ \xi^{0y}_0 \xi^{0y}_y \Big] dy}
{n \left( \P^0(\cE_0) \right)^2}
&\wle \frac{4 \ka \log n \E^{0} \Big[ \xi_0^0 \Big]
+ \left(n - 4 \ka \log n \right) ( \E^{0} \xi_0^0 )^2}{n \left( \P^0(\cE_0) \right)^2} \\
&\weq \left(\frac{4 \ka \log n}{n} \right) \frac{1}{\P^0(\cE_0)}+\left(1-\frac{4\ka \log n}{n}\right).
\end{align*}
Using Proposition \ref{prop:origin_bad}, for $\la I_{\phi}(p,q) < 1$
\begin{equation}
n\P^0( \cE_0) = n^{1-\la I_{\phi}(p,q)} \rightarrow \infty 
\end{equation}
as $n \rightarrow \infty$. This gives the desired result in the statement of the proposition.
\end{proof}

\subsection{Proof of Theorem~\ref{thm:impossibility}}
\label{sec:proof_impossibility}
In this subsection, we tie up the results from this section to prove Theorem~\ref{thm:impossibility}.
\begin{proof}[Proof of Theorem~\ref{thm:impossibility}]
It was already shown in Lemma~\ref{lem:connectivity_softrgg} that if $\la \zeronorm{\phi} < 1$, the graph $G'$ is disconnected. 
Any algorithm for recovering node communities in $G$ can do so only if there is a single connected component in $G'$. When there are multiple components, the algorithm recovers a community assignment for each component. However, one can obtain another valid community assignment by flipping the node communities in one component while retaining the assignments in other components. This is possible since there are no interactions (neighbours or non-neighbours) across components. However, only one of these community assignments corresponds to the ground truth up to a global flip. Thus,
it is impossible for any algorithm to unanimously decide the node communities. In other words, exact recovery is not possible. This proves the necessity of condition (a) in the statement of the theorem.

For the condition $\la I_{\phi}(p,q) < 1$, note that the statements of Propositions \ref{prop:origin_bad} and \ref{prop:second_momentz} imply  \eqref{eq:secondmomentcond} and \eqref{eq:firstmomentcond}. From Lemma~\ref{lem:first_second_moment_cond} there exists a bad node with high probability i.e., $\lim_{n\to \infty}\P(Z \ge 1) \weq 1$ whenever $\la I_\phi(p,q) <1$. Using Lemma~\ref{lem:compMAPnec}, presence of a bad node indicates that the MAP estimate is not unique, or not equal to the ground-truth up to a global sign flip. Therefore, under the same conditions, community structure cannot be recovered exactly.
\end{proof}

\section{Analysis of Algorithm~\ref{alg:full}}
\label{sec:achievability}

In this section we prove Theorem~\ref{thm:exact_recovery} by 
carrying out a detailed analysis of Algorithm~\ref{alg:full}.
As a preliminary step we also prove (Theorem~\ref{thm:almost_exact}) that
the initialization and the propagation phases in Algorithm~\ref{alg:full} recover the community memberships almost exactly.

We consider a realisation $(V, \{\sigma_v\}, \{A_{uv}\})$ sampled from the $\GKBM_n(\la, \phi, p, q)$ model in which
$\la \zeronorm{\phi} > 1$ and $\la I_\phi(p,q) > 1$, and 
\[
 \eps = \inf_{x \le \zeronorm{\phi}} \phi(x)
\] 
satisfies $\eps > 0$. We analyse Algorithm~\ref{alg:full} with input $(V, \{A_{uv}\})$,
where the resolution parameter $\chi > 0$
and the threshold parameter $\de > 0$ are chosen small enough according to \eqref{eq:chi} and \eqref{eq:delta}.
We denote the segments of length $\chi\log n$ that partition the circle by
\[
 C_i, \quad i=1,\cdots,\frac{n}{\chi \log n}.
\]
The set of nodes contained in a segment $C$ is denoted by $V(C) = V \cap C$, and
the segment is called \emph{$\de$-occupied} if $|V(C)| \ge \de \log n$.
The $\de$-occupied segments of the partition $\{C_i\}$ are denoted by
\[
 B_j, \quad j = 1,\cdots, J.
\]
Furthermore, we often abbreviate $\ka = \zeronorm{\phi}$ and $V_j = V(B_j)$.

\subsection{Preliminaries}

In this subsection, we obtain a few results that will be required for the analysis of the algorithm.

\subsubsection{Number of nodes in each segment}

\begin{lem}
\label{lem:max_nodes}
Let $V$ be a homogeneous Poisson point pattern with intensity $\la > 0$
on a circle of circumference $n$ that is partitioned into segments $\{C_i\}$
of length $\chi \log n$.
Then
\[
 \P \Big( \max_i \abs{V(C_i)} \le \De \log n \Big)
 \wge 1 - \frac{1}{\chi \log n},
\]
where $\De \ge \la\chi+1 + \sqrt{2\la\chi+1}$.
\end{lem}
\begin{proof}
The number of nodes $|V(C_i)|$ in a segment $C_i$ of length $\chi \log n$
is Poisson-distributed with mean $\la \chi \log n$.
Using the Chernoff bound from Lemma~\ref{lem:conc_poisson}, we obtain
\begin{align*}
 \P(|V(C_i)| > \Delta \log n )
 &\wle \exp \left(-\frac{(\Delta-\la \chi)^2\log n}{2\Delta}\right)
 \weq n^{- \frac{(\Delta-\la \chi)^2  }{2\Delta}}.
\end{align*}
Our choice of $\De$ implies that $\frac{(\Delta-\la \chi)^2  }{2\Delta} \ge 1$.
The union bound now gives
\[
 \P \Big( \max_i \abs{V(C_i)} > \De \log n \Big)
 \wle \frac{n}{\chi \log n} n^{- \frac{(\Delta-\la \chi)^2  }{2\Delta}}
 \wle \frac{1}{\chi \log n},
\]
so the claim follows.
\end{proof}

Similarly, we will also need the following lemma to bound the number of nodes in a segment from below.

\begin{lem}
\label{lem:single_block_nodes}
Suppose $C$ is a segment of length $\nu \log n$ with $\la \nu >1$.
Then for any $0 < \al < \la \nu - 1$,
\[
 \P(|V(C)| > \be \log n)
 \wge 1-n^{-1-\al}
\]
with $\be = \la\nu h^{-1} ( \frac{1+\al}{\la\nu} )$,
where $h^{-1}(\cdot)$ denotes the inverse of $h(x) = x \log x + 1 - x$ on $(0,1)$.
\end{lem}
\begin{proof}
The number of nodes within $C$ is a Poisson random variable with mean $\la \nu \log n$,
so the result follows directly from Lemma~\ref{lem:conc_poisson_new}.
\end{proof}

\subsubsection{Presence of a connected skeleton}
\label{sec:skeleton}

Line~2 of Algorithm~\ref{alg:full} chooses the sequence of $\de$-occupied segments
$\{B_j,j=1,\cdots,J\}$ for the propagation step.
We refer to this sequence as the \emph{$\de$-skeleton}.
The $\de$-skeleton is called \emph{$(\ka,\chi)$-connected} if between any $\de$-occupied segments
$B_j$ and $B_{j+1}$, there are at most $\lfloor\frac{\ka}{\chi}\rfloor - 2$ segments
that are not $\de$-occupied.
This requirement implies that all nodes in $B_j \cup B_{j+1}$ are within distance $\ka \log n$ of each other.
This is crucial for propagating labels from one $\delta$-occupied segment to another in the Propagation phase.
The following lemma provides a sufficient condition for the $\de$-skeleton to be $(\ka,\chi)$-connected.

\begin{lem}
\label{lem:connected_vis_graph}
Assume that $\la$ and $\ka = \zeronorm{\phi}$
satisfy $\la \ka > 1$, and that the parameters 
$\chi,\de > 0$ satisfy \eqref{eq:chi}--\eqref{eq:delta}.
Let $\cH$ be the event that the
$\de$-skeleton $\{B_j\}$ is $(\ka,\chi)$-connected.
Then there exists a number $n_0 > 0$ such that
\[
 \P(\cH^c) \wle \frac{1}{\chi \log n}
 \qquad \text{for all $n \ge n_0$}.
\]
\end{lem}

\begin{proof}
Let $r= \lfloor\frac{\ka}{\chi}\rfloor-1$ and $\{C_i:i=1,\dots,\lceil\frac{n}{\chi \log n}\rceil\}$
be all the segments numbered in the clockwise direction.
The $\de$-skeleton is not $(\ka,\chi)$-connected if and only if there exist $r$ consecutive segments each containing less than $\de \log n$ points.  Then, using indices modulo $\big\lceil\frac{n}{\chi \log n}\big\rceil$,
\begin{align*}
 \P(\cH^c)
 &\wle \sum_{i=1}^{\lceil\frac{n}{\chi \log n}\rceil}
 \P\big( |V(C_{i+1})| < \de \log n, \dots, |V(C_{i+r})| < \de \log n \big) 
\end{align*}
Let 
\(
 U_i := \bigcup_{m=\LLupdated{1}}^r C_{i+m}.
\) 
If each of the segments $C_{i+1},\dots,C_{i+r}$ has at most $\de \log n$ nodes, then $|V(U_i)| \le \frac{\ka}{\chi}\de \log n$ since $r \le \frac{\ka}{\chi}$.
Hence
\begin{equation*}
 \P\big( |V(C_{i+1})| < \de \log n, \dots, |V(C_{i+r})| < \de \log n \big) 
 \wle \P\Big(|V(U_i)| \le \frac{\ka}{\chi} \de \log n \Big).
\end{equation*}
Note that $\volume(U_i) = \big(\lfloor \frac{\ka}{\chi}\rfloor-1\big)\chi \log n \ge \nu \log n$,
where $\nu = \ka - 2\chi$. Note also that \eqref{eq:delta} implies that $\frac{\ka}{\chi}\de \le \be$,
with $\be = \la \nu h^{-1}( \frac12 + \frac{1}{2\la\nu})$.
By applying Lemma~\ref{lem:single_block_nodes} with 
and
$\al = \frac12(\la\nu-1)$, and noting that $\be = \la \nu h^{-1}( \frac{1+\al}{\la\nu})$,
it follows that 
\[
 \P\Big(|V(U_i)| \le \frac{\ka}{\chi} \de \log n \Big)
 \wle n^{-1-\al}.
\]
Hence
\begin{align*}
 \P(\cH^c)
 \wle \bigg(\frac{n}{\chi \log n} + 1 \bigg) n^{-1-\al}
 \weq \frac{1}{\chi n^{\al} \log n} + \frac{1}{n^{1+\al}}.
\end{align*}
We conclude that
$\P(\cH^c) \le \frac{1}{\chi \log n}$ for all $n$ large enough
so that
$n^\al \ge 2$ and $n^{1+\al} \ge 2 \chi \log n$.
\end{proof}

\subsubsection{Additional definitions}

In this section, we introduce a few definitions which will be required in the analysis of Algorithm~\ref{alg:full}.
Line~\ref{line:choose_u0}  chooses an initial node $u_0 \in V_1$ and assigns $\hsigma_{u_0} = +1$. The node communities are obtained relative to that of node $u_0$. This means that if $\sigma_{u_0} = -1$, the recovered node communities are the negation of the ground-truth communities. To formalize this notion, we make the following definition.
\begin{defn}
For $S \subseteq \torus$ (either a segment or a set),
the restricted Hamming distance between two community membership vectors $\tbsigma$ and $\bsigma$ relative to $u_0 \in V$ is defined as 
\[
 \Ham_{S}(\tbsigma,\bsigma)
 = |\{v\in V(S) \colon \tsigma_v \ne \sigma_{u_0} \sigma_v\}|.
\]
\end{defn}
\begin{rem}
Note that for any estimate $\hbsigma$, $\Ham(\hbsigma,\bsigma) \le \Ham_V(\hbsigma,\bsigma)$ since $\sigma_{u_0} \in \{-1,+1\}$. Therefore, it suffices to show almost-exact and exact recovery with respect to the Hamming distance relative to $u_0$.
\label{rem:Hamming_dist_relation}
\end{rem}

For discrete probability measures $P,Q$,
the \Renyi divergence of order $\al \ne 1$
is denoted by
$D_\al(P\|Q) = (\alpha - 1)^{-1} \log \sum_x P(x)^\al Q(x)^{1-\al}$,
and the Hellinger distance is defined by
$\Hel^2(P,Q) = \frac{1}{2}\sum_x (\sqrt{P(x)}-\sqrt{Q(x)})^2$.
In particular,
\begin{equation}
 D_{1/2}(P\|Q) = -2\log \sum_x \sqrt{P(x)Q(x)}
 \qquad \text{and} \qquad 
 D_{3/2}(P\|Q)
 = 2\log \sum_x \frac{P(x)^{3/2}}{Q(x)^{1/2}}.
 \label{eq:renyi_div_eval}
\end{equation}
We write $D_{1/2}(P,Q) = D_{1/2}(P \| Q)$ to highlight that
$D_{1/2}$ is symmetric in its arguments. 
We also note that
$D_{1/2}(P,Q) \ge \Hel^2(P,Q)$ and $D_\alpha(P\|Q)$ is monotonically increasing in $\alpha$ (see \cite{van2014renyi}).

The rest of this section analyses the Initialization, Propagation and the Refinement phases, and culminates by proving Theorem~\ref{thm:exact_recovery}.

\subsection{Initialization phase}
Line~\ref{line:shorthand} of Algorithm~\ref{alg:full} introduces a shorthand notation $V_j$ for the set of nodes present in the $\delta$-occupied segment $B_j$. Given the locations and community labels of nodes $u_0,u,$ and $v$, the random variable $A_{uv}A_{u_0v}$ is distributed as 
\begin{equation}
A_{uv}A_{u_0v} \sim \begin{cases}
\Bern\left(Q_{uv}Q_{u_0v}p^2\right), & \text{if } \sigma_u = \sigma_{u_0}=\sigma_v, \\
\Bern\left(Q_{uv}Q_{u_0v}q^2\right), & \text{if } \sigma_u = \sigma_{u_0}\ne\sigma_v, \\
\Bern\left(Q_{uv}Q_{u_0v}pq\right),  & \text{if } \sigma_u \ne \sigma_{u_0}.
\end{cases}
\label{eq:dist_comm_nbrs}
\end{equation}

Line~\ref{line:count_common_nbrs} of Algorithm~\ref{alg:full} computes the number of common neighbours of $u_0$ and $u$ within $B_1$. This is compared with the average number of common neighbours $M(u,u_0,B_1)$ in Line~\ref{line:init_compare}. Note that $M(u,u_0,B_1) =\Theta(\log n)$ since
\begin{equation}
\eps^2 \delta \log n 
\wle 
\sum_{v\in V_1\setminus \{u,u_0\}} Q_{uv}Q_{u_0v}
\wle  \Delta \log n.
\label{eq:M_theta_logn}
\end{equation}

Define the events $\cT_{u_0,u} := \{N_{u_0,u}< M(u,u_0,B_1)\}$ and $\cI_1 := \{|V_1| \in [\delta \log n,\Delta \log n]\}$. Let $\P_{V_1}$ be the probability distribution conditioned on the nodes within $V_1$. The following two propositions bound the probability that Lines 5--9 of Algorithm~\ref{alg:full} make an error in recovering the community of node $u$ depending on whether $u$ and $u_0$ are in the same or different community. 

\begin{prop}
There exist constants $c_1,c_2>0$ such that
for any $u\in V_1 \setminus \{u_0\}$:
\begin{itemize}
\item[(a)] If $u$ and $u_0$ are in different communities, then $\P_{V_1}(\cT_{u_0,u}^c|\sigma_u\ne\sigma_{u_0},\cI_1) \le n^{-c_1\delta \eps^4}$
\item[(b)] If $u$ and $u_0$ are in the same community, then $\P_{V_1}(\cT_{u_0,u}|\sigma_u = \sigma_{u_0},\cI_1) \le n^{-c_2 \delta \eps^4}$
\end{itemize}
\label{prop:type2}
\end{prop}

\begin{proof}
\emph{Part (a):} Given the communities and locations of nodes within $V_1$, the number of common neighbours $N_{u_0,u}$ is a sum of independent Bernoulli random variables with mean $pq\sum_{v\in V_1} Q_{uv}Q_{u_0v}$ when $\sigma_u\neq\sigma_{u_0}$. Conditioning on the community assignment within $V_1$ and using Hoeffding's inequality (see Lemma~\ref{lem:hoeffding}), we obtain
\begin{align*}
    \P_{V_1}(N_{u_0,u}&> M(u,u_0,B_1)\big| \sigma_u\ne\sigma_{u_0},\cI_1) \\
    &= \frac{1}{2^{|V_1|-2}} \sum_{\bsigma_{V_1\setminus \{u,u_0\}}} \P_{V_1}\bigg(\sum_{v\in V_1\setminus \{u,u_0\}} A_{uv}A_{u_0v} > M(u,u_0,B_1)\Big|\sigma_u\ne\sigma_{u_0},\cI_1,\bsigma_{V_1\setminus \{u,u_0\}}\bigg)\\
    &\le \frac{1}{2^{|V_1|-2}} \sum_{\bsigma_{V_1\setminus \{u,u_0\}}} \exp\bigg[\frac{-2(M(u,u_0,B_1)-pq\sum_{v\in V_1\setminus \{u,u_0\}} Q_{uv}Q_{u_0v})^2}{|V_1|-2}\bigg]\\
    &\le \exp\bigg[-\frac{2\big(\frac{(p+q)^2}{4}-pq\big)^2}{|V_1|-2} \big(\eps^2(|V_1|-2)\big)^2\bigg]\\
    &\le \exp\bigg[-2 \eps^4 \delta \log n \bigg(\frac{(p+q)^2}{4}-pq\bigg)^2  \bigg].
\end{align*}
Therefore, $\P_{V_1}(N_{u_0,u}> M(u,u_0,B_1)\big|\sigma_u\ne\sigma_{u_0},\cI_1,) \le n^{-c_1 \delta \eps^4}$ where $c_1 =  \big(\frac{(p+q)^2}{4}-pq\big)^2 $.

\emph{Part (b):}
We proceed on similar lines as in the proof of part (a). The expected value of $N_{u_0,u}$ can be computed as 
\begin{align*}
\E_{V_1}[N_{u_0,u} | \sigma_u=\sigma_{u_0},\cI_1] 
&= \sum_{v \in V_1\setminus \{u,u_0\}}\E [A_{uv}A_{u_0v}| \sigma_u=\sigma_{u_0},\cI_1] \\
&= \sum_{v \in V_1\setminus \{u,u_0\}}\frac{1}{2} \E [A_{uv}A_{u_0v}| \sigma_u=\sigma_{u_0}=\sigma_v,\cI_1] +\frac{1}{2} \E [A_{uv}A_{u_0v}| \sigma_u=\sigma_{u_0}\neq \sigma_v,\cI_1]\\
&= \sum_{v \in V_1\setminus \{u,u_0\}} \frac{p^2+q^2}{2}Q_{uv}Q_{u_0v}.
\end{align*}
Since $N_{u_0,u}$ is a sum of independent Bernoulli random variables, using Hoeffding's inequality again, we obtain
\begin{align*}
\P_{V_1}(N_{u_0,u} < M(u,u_0,B_1)\big|& \sigma_u\ne\sigma_{u_0},\cI_1) \\
&\le \frac{1}{2^{|V_1|-2}} \sum_{\bsigma_{V_1\setminus \{u,u_0\}}} \exp\bigg[\frac{-2(M(u,u_0,B_1)-\frac{p^2+q^2}{2}\sum_{v\in V_1\setminus \{u,u_0\}} Q_{uv}Q_{u_0v})^2}{|V_1|-2}\bigg]\\
&\le \exp\bigg[-2\frac{\big(\frac{p^2+q^2}{2}-\frac{(p+q)^2}{4}\big)^2}{|V_1|-2} \big(\eps^2(|V_1|-2)\big)^2\bigg]\\
&\le \exp\bigg[-\eps^4\delta \log n \bigg(\frac{p^2+q^2}{2}-\frac{(p+q)^2}{4}\bigg)^2  \bigg]
\end{align*}
which proves the second part of the proposition with $c_2= \big(\frac{p^2+q^2}{2}-\frac{(p+q)^2}{4}\big)^2 $.
\end{proof}

The following lemma is the main result of the Initialization phase and asserts that Lines 3--9 of Algorithm~\ref{alg:full} recover the communities of all nodes within block $B_1$ with high probability. 

\begin{lem}
\label{lem:error_all_nodes_phase1}
The Initialization phase of Algorithm~\ref{alg:full} recovers the communities of nodes in the initial block $B_1$ with high probability, i.e., there exists $c>0$ such that
\[
\P_{V_1} \left(
\Ham_{B_1}(\tbsigma, \bsigma) = 0 \ \Big| \ \cI_1
\right)
\wge 1 - \frac{\Delta \log n}{ n^{c \delta \eps^4 }}.
\]
\end{lem}
\begin{proof} 
As a consequence of Proposition \ref{prop:type2}, the probability of making an error in estimation of the community of any node $u\in V_1\setminus \{u_0\}$ can be bounded as 
\begin{align*}
\P_{V_1}(\tsigma_u \ne \sigma_{u_0}\sigma_u|\cI_1) &=  \P_{V_1}(\tsigma_u \ne \sigma_{u_0} \sigma_u\,|\,\sigma_u\ne\sigma_{u_0},\cI_1)\,\P_{V_1}(\sigma_u\ne\sigma_{u_0})\\
&\hspace{2cm}+ \P_{V_1}(\tsigma_u \ne \sigma_{u_0} \sigma_u\,|\,\sigma_u=\sigma_{u_0},\cI_1)\,\P_{V_1}(\sigma_u=\sigma_{u_0})\\ 
&= \P_{V_1}(\cT_{u_0,u}^c|\sigma_u\ne\sigma_{u_0},\cI_1)\ \frac{1}{2}+ \P_{V_1}(\cT_{u_0,u}|\sigma_u=\sigma_{u_0},\cI_1)\ \frac{1}{2}\\
&\le n^{-c \delta \eps^4},
\end{align*}
where $c=\min \{c_1,c_2\}$ and the constants $c_1,c_2$ are the ones in Proposition \ref{prop:type2}. Using the union bound, we obtain
\begin{align*}
\P_{V_1}\left(\bigcap_{u \in V_1}\{\tsigma_u = \sigma_{u_0} \sigma_u\}\Big| \cI_1 \right)  
&\ge 1-\sum_{u\in V_1}\P_{V_1}\left(\tsigma_u \ne \sigma_{u_0} \sigma_u\Big| \cI_1 \right) \\
&\ge 1- |V_1| n^{-c\delta \eps^4}\\
&\ge 1- \frac{\Delta\log n}{n^{c\delta \eps^4}}.
\end{align*}
\end{proof}

\subsection{Propagation phase}
\label{sec:propa}
Lines 10--15 of Algorithm~\ref{alg:full} constitute the propagation phase in which the communities recovered in the initial segment are propagated to successive $\delta$-occupied segments as shown in Fig.~\ref{fig:block_division}. The analysis of the propagation phase is done in three steps as described below.
\begin{itemize}
\item \textbf{Step 1:} We first obtain the probability of making an error in assigning the community to a node $u\in V_{j+1}$ given the estimated communities of all nodes in $V_j$. This allows us to evaluate the number of mistakes made in segment $B_{j+1}$ given the node communities in $B_j$.
\item \textbf{Step 2:} Using a coupling argument we show that the number of mistakes in segment $B_{j+1}$ is at most a constant, $M$, given the communities and the number of mistakes in segment $B_j$ with overwhelming probability.
\item \textbf{Step 3:} Propagating over successive segments incurs a small drop in probability for there being $M$ errors in the next segment. This drop in probability can be made small thus recovering the communities of nodes in all $\delta$-occupied segments. The estimator thus obtained recovers the communities almost exactly. 
\end{itemize}

In our analysis, we make use of the following constants. Let
\begin{align}
\xi_1(p,q,\eps) 
&= 
\max\Bigg\{2\log \left[\frac{p^{3/2}}{q^{1/2}}+\frac{(1-p\eps)^{3/2}}{(1-q)^{1/2}}\right], 
2\log \left[\frac{q^{3/2}}{p^{1/2}}+\frac{(1-q\eps)^{3/2}}{(1-p)^{1/2}}\right]\Bigg\}, 
\quad 
\text{ and } \nonumber\\
\xi_2(p,q,\eps) 
&=
\eps (\sqrt{p}-\sqrt{q})^2.
\label{eq:xi1xi2}
\end{align}
Further, let 
\begin{equation}
M 
=\frac{10}{4\delta \eps(\sqrt{p}-\sqrt{q})^2}, \quad \eta_1 = e^{\xi_1 M}, \quad \text{ and } \quad c'=\frac{\delta \xi_2}{2}.
\label{eq:M}
\end{equation}

\subsubsection{Step 1: Propagation error for a single node}
In this subsection, we evaluate the probability of making an error in estimating a node's community in the subsequent occupied segment during the propagation phase. Before we proceed, we introduce a few definitions and notations which will be useful in the following analysis. For a $\delta$-occupied segment with nodes $V_j$, define
\begin{align}
\cZ_{++}(V_j) &= \{v\in V_j : \sigma_v = \sigma_{u_0}, \tsigma_v=+1\}, \nonumber\\
\cZ_{+-}(V_j) &= \{v\in V_j : \sigma_v = \sigma_{u_0}, \tsigma_v=-1\}, \nonumber\\
\cZ_{-+}(V_j) &= \{v\in V_j : \sigma_v \neq \sigma_{u_0}, \tsigma_v=+1\}, \nonumber\\
\cZ_{--}(V_j) &= \{v\in V_j : \sigma_v \neq \sigma_{u_0}, \tsigma_v=-1\}.
\label{eq:error_nbrs}
\end{align}
To describe in words, $\cZ_{+-}(V_j)$, for example, is the set of nodes  $v\in V_j$ that belong to the ground-truth community $\sigma_v = \sigma_{u_0}$ and get assigned a label $\tsigma_v = -1$ in the propagation phase. Naturally, $\cZ_{+-}(V_j) \cup \cZ_{-+}(V_j)$ constitute all the mistakes that the Propagation phase makes in $V_j$ for $j\ge2$. 
Proposition \ref{prop:prop_single_error} below evaluates the probability of making an error in assigning the community of a node in Line \ref{line:prop_assignment} of Algorithm~\ref{alg:full}. 
\begin{prop}
\label{prop:prop_single_error}
Consider the $\delta$-occupied segments $B_j$ and $B_{j+1}$ for any $1\le j\le J-1$. Given that there are at most $M$ mistakes in segment $B_j$, the probability of making an error in assigning node $u\in V_{j+1}$ to its community by the propagation phase is bounded as
\[
 \P_V \Big(\tsigma_u \ne \sigma_{u_0}\sigma_u \, \Big| \,
 \tilde{\sigma}(V_j), \, \sigma(V_j), \, |\cZ_{+-}(V_j)| + |\cZ_{-+}(V_j)| \le M \Big)
 \wle 
 \eta_1n^{-c'},
\]
where $\eta_1, c' $ are defined in \eqref{eq:M}.
\end{prop}

\begin{proof}
We begin by evaluating the probability $\P_V(\tsigma_u \ne \sigma_{u_0}\sigma_u \,|\,\tilde{\sigma}(V_j),\sigma(V_j))$. Due to the symmetry in assigning node labels, it suffices to evaluate $\P_V(\tsigma_u= -1 \,|\,\sigma_u=\sigma_{u_0},\tilde{\sigma}(V_j),\sigma(V_j))$. To be concise, we use the notation
\[
f(u,\tilde{\sigma}(V_j)):=\sum_{v \in V_{j}} \tsigma_v \left[  A_{uv} \log \frac{p}{q} + (1-A_{uv})  \log \frac{1-p Q_{uv}}{1-q Q_{uv}}  \right].
\]
Then $\P_V(\tsigma_u= -1 \,|\,\sigma_u=\sigma_{u_0},\tilde{\sigma}(V_j),\sigma(V_j)) = \P_V(f(u,\tilde{\sigma}(V_j)) <0\,|\,\sigma_u=\sigma_{u_0},\tilde{\sigma}(V_j),\sigma(V_j))$ which can be bounded as
\begin{align}
\P_V(&f(u,\tilde{\sigma}(V_j)) <0\,\big|\,\sigma_u=\sigma_{u_0},\tilde{\sigma}(V_j),\sigma(V_j))\nonumber\\
& \le \E\left[e^{-tf(u,\tilde{\sigma}(V_j))}\,\Big|\,\sigma_u=\sigma_{u_0},\tilde{\sigma}(V_j),\sigma(V_j),V\right]\nonumber\\
&=\E_{\cF_j}\Bigg[\prod_{\stackrel{v \in V_j}{\tsigma_v=+1}}\left(\Big(\frac{q}{p}\Big)^{A_{uv}}\left(\frac{1-qQ_{uv}}{1-pQ_{uv}}\right)^{1-A_{uv}}\right)^t \prod_{\stackrel{v \in V_j}{\tsigma_v=-1}}\left(\Big(\frac{p}{q}\Big)^{A_{uv}}\left(\frac{1-pQ_{uv}}{1-qQ_{uv}}\right)^{1-A_{uv}}\right)^t\Bigg],
\label{eq:huge_expr}
\end{align}
for any $t>0$, where $\cF_j$ is the sigma algebra generated by
$\{\sigma_u=\sigma_{u_0},\tilde{\sigma}(V_j),\sigma(V_j),V\}$,
and $\E_{\cF_j}$ is the conditional expectation given $\cF_j$.
Since given the locations and the true community labels of nodes in $V_j$, the entries $A_{uv}$ are independent,
using the notation introduced in \eqref{eq:error_nbrs}, the probability in \eqref{eq:huge_expr} can be expressed as
\begin{align}
\P_V&(f(u,\tilde{\sigma}(V_j)) <0|\sigma_u=\sigma_{u_0},\tilde{\sigma}(V_j),\sigma(V_j))\nonumber\\
&\le  \prod_{\cZ_{++}(V_j)}\E_{\cF_j}\left[\left(\Big(\frac{q}{p}\Big)^{A_{uv}}\left(\frac{1-qQ_{uv}}{1-pQ_{uv}}\right)^{1-A_{uv}}\right)^t\right] \prod_{\cZ_{-+}(V_j)}\E_{\cF_j}\left[\left(\Big(\frac{q}{p}\Big)^{A_{uv}}\left(\frac{1-qQ_{uv}}{1-pQ_{uv}}\right)^{1-A_{uv}}\right)^t\right] \nonumber\\
& \hspace{0.1cm} \times \prod_{\cZ_{+-}(V_j)}\E_{\cF_j}\left[\left(\Big(\frac{p}{q}\Big)^{A_{uv}}\left(\frac{1-pQ_{uv}}{1-qQ_{uv}}\right)^{1-A_{uv}}\right)^t\right]\prod_{\cZ_{--}(V_j)}\E_{\cF_j}\left[\left(\left(\frac{p}{q}\right)^{A_{uv}}\left(\frac{1-pQ_{uv}}{1-qQ_{uv}}\right)^{1-A_{uv}}\right)^t\right].
\label{eq:prop_error1}
\end{align} 
Taking $t=\frac{1}{2}$ and computing the expectations, we obtain
\begin{align*}
\P_V(f(u,\tilde{\sigma}(V_j))&<0 |\sigma_u=\sigma_{u_0},\tilde{\sigma}(V_j),\sigma(V_j))\\
&\le \prod_{\cZ_{++}(V_j)}\sqrt{pq}Q_{uv}+\sqrt{\left(1-pQ_{uv}\right)\left(1-qQ_{uv}\right)}
\prod_{\cZ_{-+}(V_j)}\left(\frac{q^{3/2}}{p^{1/2}}Q_{uv}+\frac{\left(1-qQ_{uv}\right)^{3/2}}{\left(1-pQ_{uv}\right)^{1/2}}\right)\\
& \hspace{1cm} \prod_{\cZ_{--}(V_j)}\sqrt{pq}Q_{uv}+\sqrt{\left(1-pQ_{uv}\right)\left(1-qQ_{uv}\right)} \prod_{\cZ_{+-}(V_j)}\left(\frac{p^{3/2}}{q^{1/2}}Q_{uv}+\frac{\left(1-pQ_{uv}\right)^{3/2}}{\left(1-qQ_{uv}\right)^{1/2}}\right).
\end{align*}
The products can be written using \Renyi divergences as follows:
\begin{align*}
\P_V&(f(u,\tilde{\sigma}(V_j)) <0|\sigma_u=\sigma_{u_0},\tilde{\sigma}(V_j),\sigma(V_j))\\
&= \exp\bigg[-\frac{1}{2}\Big(\sum_{v\in \cZ_{++}(V_j)}D_{1/2}\left(\Ber(pQ_{uv}),\Ber(qQ_{uv})\right) +\sum_{v\in \cZ_{--}(V_j)}D_{1/2}\left(\Ber(pQ_{uv}),\Ber(qQ_{uv})\right)\Big)\\
&\hspace{1.5cm} +\frac{1}{2}\Big(\sum_{v\in \cZ_{+-}(V_j)}D_{3/2}\left(\Ber(pQ_{uv})\|\Ber(qQ_{uv})\right)+\sum_{v\in \cZ_{-+}(V_j)}D_{3/2}\left(\Ber(qQ_{uv})\|\Ber(pQ_{uv})\right)\Big)\bigg]\\
&= \exp\Bigg[-\frac{1}{2}\Bigg(\sum_{v\in B_j}D_{1/2}\left(\Ber(pQ_{uv}),\Ber(qQ_{uv})\right))\Bigg)\\
&\hspace{1cm} +\frac{1}{2}\Bigg(\sum_{v\in \cZ_{+-}(V_j)}D_{3/2}\left(\Ber(pQ_{uv})\|\Ber(qQ_{uv})\right)+D_{1/2}\left(\Ber(pQ_{uv}),\Ber(qQ_{uv})\right)
\\
&\hspace{1.4cm}+\sum_{v\in \cZ_{-+}(V_j)}D_{3/2}\left(\Ber(qQ_{uv})\|\Ber(pQ_{uv})\right)+D_{1/2}\left(\Ber(pQ_{uv}),\Ber(qQ_{uv})\right)\Bigg)\Bigg].
\end{align*}
Since $\alpha$-\Renyi divergence is monotonically increasing in $\alpha$, it is true that 
\[
D_{1/2}(\Ber(pQ_{uv}),\Ber(qQ_{uv}))
\wle 
\min
\big\{
D_{3/2}(\Ber(qQ_{uv})\|\Ber(pQ_{uv})),
D_{3/2}(\Ber(pQ_{uv})\|\Ber(qQ_{uv}))
\big\}.
\]
Moreover, using $\eps \le Q_{uv} \le 1$ along with \eqref{eq:renyi_div_eval}, the divergence terms can be bounded as
\begin{align}
 D_{3/2}(\Ber(pQ_{uv})\|\Ber(qQ_{uv}))
 &\wle 
 2\log \left[\frac{p^{3/2}}{q^{1/2}}+\frac{(1-p\eps)^{3/2}}{(1-q)^{1/2}}\right] 
 \wle 
 \xi_1(p,q,\eps)
 \nonumber\\
 D_{3/2}(\Ber(qQ_{uv}) \| \Ber(pQ_{uv}))
 &\wle 
 2\log \left[\frac{q^{3/2}}{p^{1/2}}+\frac{(1-q\eps)^{3/2}}{(1-p)^{1/2}}\right]
 \wle 
 \xi_1(p,q,\eps),
 \label{eq:xis}
\end{align}
where $\xi_1(p,q,\eps)$ is defined in \eqref{eq:xi1xi2}. For the other direction, we use 
\begin{align*}
D_{1/2}(\Ber(pQ_{uv}),\Ber(qQ_{uv}))&\ge \text{Hel}^2(\Ber(pQ_{uv}),\Ber(qQ_{uv})) \\
&= (\sqrt{pQ_{uv}}-\sqrt{qQ_{uv}})^2+(\sqrt{1-pQ_{uv}}-\sqrt{1-qQ_{uv}})^2\\
& \ge \eps (\sqrt{p}-\sqrt{q})^2 = \xi_2(p,q,\eps).
\end{align*}
Using these definitions and further conditioning on the number of errors in segment $B_j$ to be at most a constant, i.e., $|\cZ_{+-}(V_j)|+|\cZ_{-+}(V_j)| \le M$, we can write
\begin{align*}
\P_V(&f(u,\tilde{\sigma}(V_j)) <0|\sigma_u=\sigma_{u_0},\tilde{\sigma}(V_j),\sigma(V_j),|\cZ_{+-}(V_j)|+|\cZ_{-+}(V_j)| \le M)\\
&\le 
\exp\Bigg[-\frac{1}{2}\Bigg(\sum_{v\in V_j}D_{1/2}\left(\Ber(pQ_{uv}),\Ber(qQ_{uv})\right)\Bigg)+\frac{1}{2}\Bigg(\sum_{v\in \cZ_{+-}(V_j)}\xi_1+\xi_1+\sum_{v\in \cZ_{-+}(V_j)}\xi_1+\xi_1\Bigg)\Bigg]\\
&\le 
\exp\Bigg[-\frac{1}{2}\Bigg(\sum_{v\in V_j}D_{1/2}\left(\Ber(pQ_{uv}),\Ber(qQ_{uv})\right)\Bigg)+\xi_1\big(|\cZ_{+-}(V_j)|+|\cZ_{-+}(V_j)|\big)\Bigg]\\
&\le 
\exp\left[-\frac{1}{2}\xi_2 |V_j|\right]e^{\xi_1 M}
\end{align*}
Since $|V_j|>\delta \log n$, we obtain 
\begin{equation}
 \P_V\Big( f(u,\tilde{\sigma}(V_j)) <0 \, \Big| \, \sigma_u=\sigma_{u_0},\tilde{\sigma}(V_j),\sigma(V_j),|\cZ_{+-}(V_j)|+|\cZ_{-+}(V_j)| \le M \Big)
 \wle 
 e^{\xi_1 M} n^{-\frac{\delta \xi_2}{2}}
 \weq 
 \eta_1 n^{-c'}.
 \label{eq:error_pos}
\end{equation}
In a similar way, we can also obtain
\begin{equation}
 \P_V\Big( f(u,\tilde{\sigma}(V_j)) \ge 0 \, \Big| \, \sigma_u\neq \sigma_{u_0},\tilde{\sigma}(V_j),\sigma(V_j),|\cZ_{+-}(V_j)|+|\cZ_{-+}(V_j)| \le M \Big)
 \wle 
 \eta_1 n^{-c'}.
\label{eq:error_neg}
\end{equation}
From \eqref{eq:error_pos} and \eqref{eq:error_neg}, conditioning on whether $u$ and $u_0$ are in the same community or not proves the statement of the proposition.
\end{proof}

\begin{rem}
    The statement of Proposition \ref{prop:prop_single_error} holds for any constant $M$ and, in particular, also to the constant  defined in \eqref{eq:M}.
\end{rem}

\subsubsection{Step 2: Number of mistakes in each segment}
In this section, we show that there are at most a constant number of errors in each of the occupied segments.
For $j=1,\dots,J$, let $\cA_j$ be the event that the propagation step makes at most $M$ errors in segment $B_j$, i.e.,
\begin{equation}
\label{event:M_errors}
\cA_j 
= \{\Ham_{B_j}(\tbsigma,\bsigma) \le M\},
\end{equation}
and $\cI_j$ be the event 
\begin{equation}
\cI_j 
= \{\delta \log n \le |V_{j}| \le \Delta \log n\}.
\label{eq:Ij_event}
\end{equation}
 Note that $\P_V(\cA_1) \ge \P_{V_1} \left(
\Ham_{B_1}(\tbsigma, \bsigma)=0\right) \ge (1-\Delta n^{-c\de \eps^4} \log n)$ from Lemma~\ref{lem:error_all_nodes_phase1}. The following lemma characterizes the total number of errors made in a single segment $B_j$ for $j\ge 2$.

\begin{lem}
For any $j=1,\cdots,J-1$, 
\[
\P_V \left(
\cA_{j+1}^c
\ \Big| \
 \, \tilde{\sigma}(V_{j}),\sigma(V_j),\cA_j^c,\cI_j
\right)
\wle \left(\frac{e\Delta\eta_1}{M}\right)^M  n^{-9/8}.
\]

\label{lem:Hamming}
\end{lem}
\begin{proof}
Since the estimate $\tsigma_u$ for $ u\in V_{j+1}$ is independent for each node conditional on the previous occupied segment, the number of errors in each segment can be stochastically dominated by a binomial random variable 
\[
 \Ham_{B_j}(\tbsigma,\bsigma)
 \preccurlyeq \Bin(\Delta \log n,\eta_1 n^{-c'})
 \triangleq Z,
\]
with mean $\mu_Z$. The required probability can then be bounded as 
\begin{align*}
 \P_V\left(\cA_{j+1}^c \ \big|\ \tilde{\sigma}(V_{j}), \sigma(V_j), \cA_{j} ,\cI_j\right) 
 &\wle \P(\Bin(\Delta \log n,\eta_1 n^{-c'})> M )\\
 &\weq \P(Z-\mu_Z > M-\mu_Z)\\
 &\weq \P\left(Z > \mu_Z\left(1+\frac{M-\mu_Z}{\mu_Z}\right)\right).
\end{align*}
Using Lemma~\ref{lem:conc_binomial} on the concentration of the binomial distribution, we obtain
\begin{align*}
 \P_V\left(\cA_{j+1}^c \ \big|\ \tilde{\sigma}(V_{j}) ,\sigma(V_j) , \cA_{j},\cI_j\right)
 &\le \frac{\big(e^{\frac{M-\mu_Z}{\mu_Z}}\big)^{\mu_Z}}{\bigg(\left(\frac{M}{\mu_Z}\right)^{\frac{M}{\mu_Z}}\bigg)^{\mu_Z}}\\
 &\le e^{M-\mu_Z} \left(\frac{\mu_Z}{M}\right)^M\\
 &\le \left(\frac{e\Delta\eta_1}{M}\right)^M \frac{(\log n)^M}{n^{c'M}},
\end{align*}
since $e^{-\mu_Z} \le 1$.
Note that $c'= \frac{\delta \xi_2}{2}$ which gives $c'M = \frac{\delta M \eps (\sqrt{p}-\sqrt{q})^2}{2} = \frac{10}{8}$. Along with $(\log n)^M \le n^{1/8}$ for large enough $n$, we obtain the statement in the lemma
\end{proof}

\subsubsection{Step 3: Almost exact recovery}
The final step of the propagation phase involves showing that the estimate in Line~\ref{line:after_prop} of Algorithm~\ref{alg:full} recovers the communities almost exactly. Along with nodes in segments that are not $\delta$-occupied, we show that there are at most $\eta \log n$ number of errors in the vicinity of every node for some $\eta>0$. The estimate is cleaned up to remove these errors in the refinement phase.

Let $\cG = \cH \cap \cI$, where $\cH$ is the event
the $\de$-skeleton is $(\ka,\chi)$-connected,
and
$\cI$ is the event that $\max_i \abs{V(C_i)} \le \De \log n$.
In particular, any point configuration in $\cG$ satisfies $\delta \log n \le |V_{j}| \le \Delta \log n, j =1,\dots,J$ for occupied segments, and therefore $\cG \subset \bigcap_{j=1}^J \cI_j$. Then, using Lemma~\ref{lem:max_nodes} and Lemma~\ref{lem:connected_vis_graph}  we have $\P(\cG) \ge 1-\frac{2}{\chi \log n}$. 
We now evaluate the effectiveness of the propagation phase in the following lemma.

\begin{lem}
\label{lem:prop_eventA}
Let $\la>0$, $0<q<p<1$, $0 < \ka < \infty$, and assume that $\phi(x)>0$ for all $x\in[0,\ka]$. If $\la \ka>1$, for any realisation of $V \in \cG$, the output $\tbsigma$ of the propagation phase of Algorithm~\ref{alg:full} satisfies 
\[
\P_V\bigg(\max_{1 \le j \le J} \Ham_{B_j}( \tbsigma, \bsigma )
\le M \bigg)
\wge 1-o(1),
\]
where $M$ is as defined in \eqref{eq:M}.
\end{lem}
\begin{proof}

Recall the definition of $\cA_j$ in \eqref{event:M_errors}. For any realisation of $V$ such that $\delta \log n \le |V_{j}| \le \Delta \log n$ for $j=1,\cdots,J-1$, the probability $\P_V(\cA_{j+1}^c|\ \tilde{\sigma}(V_{j}), \sigma(V_j), \cA_{j}) = \P_V(\cA_{j+1}^c|\ \tilde{\sigma}(V_{j}), \sigma(V_j),\bigcap_{k<j}\cA_k)$ since given the locations and estimated communities of nodes in $V_j$, the estimates $f(u,\tsigma(V_{j}))$ are independent of $\cA_k$ for $k<j$. Moreover, since the bound from Lemma~\ref{lem:Hamming} does not depend on $\tsigma(V_j)$ and $\sigma(V_j)$ we can uniformly bound the probability as $\P_V(\cA_{j+1}^c|\ \bigcap_{k<j} \cA_{k}) \le \eta_2 n^{-9/8}$, where $\eta_2 = \left(\frac{e\Delta\eta_1}{M}\right)^M$.
Thus, we obtain
\begin{align*}
\P_V\left(\bigcap_{j=1}^{J} \cA_j \right)
&= \P_V(\cA_1) \prod_{j=2}^{J}\P_V\bigg(\cA_j\Big| \bigcap_{k<j}\cA_k\bigg) \\
&\ge \left(1-\Delta n^{-c\de \eps^4} \log n\right) \left(1-\eta_2 n^{-9/8}\right)^{\frac{n}{\chi \log n}} \\
&\ge \left(1-\Delta n^{-c\de \eps^4} \log n\right) \left(1-\frac{\eta_2}{\chi n^{1/8}\log n} \right)\\
&= 1-o(1).
\end{align*}
\end{proof}
Recall the definition of a visibility set in Definition \ref{defn:visibility_set}.
The following theorem asserts that the community estimate $\tilde{\bsigma}$ obtained after the initialization and the propagation phases recovers the node communities almost-exactly.

\begin{thm}
\label{thm:almost_exact}
Let $\la>0$, $0 < \zeronorm{\phi} < \infty$, and assume that $\phi(x)>0$ for all $x\in[0,\zeronorm{\phi}]$. If $\la \zeronorm{\phi}>1$, then $\tbsigma$ recovers the communities almost exactly as defined in \eqref{eq:almost_exact_recovery}. Moreover, for any $\eta >0$,
\begin{align*}
\P\Big(\max_{u\in V} \  \Ham_{\cV(u)}(\tbsigma,\bsigma) \le \eta \log n \Big) 
\ge 1-o(1).
\end{align*} 
\end{thm}
\begin{proof}
Fix any $\eta >0$. Let $\chi$ be chosen as in \eqref{eq:chi}.
Choose $\delta$ according to \eqref{eq:delta} and satisfying $\delta < \frac{\eta \chi}{2\ka+\chi}$.
From Lemma~\ref{lem:prop_eventA}, there exists a constant $M$ such that
$
\P_V\left(\bigcap_{j=1}^{J} \{\Ham_{B_j}(\tbsigma,\bsigma) \le M\}\right) \ge 1-o(1),
$
for any realization $V\in \cG$. Note that this is a uniform bound on the probability independent of the realization. Moreover, since $M< \delta \log n$ for sufficiently large $n$, 
$\P_V\left(\bigcap_{j=1}^{J} \{\Ham_{B_j}(\tbsigma,\bsigma) \le \delta \log n\}\right) \ge 1-o(1).$
For a segment $C_i$ that is not $\delta$-occupied, since $|V(C_i)| \le \delta\log n$, we obtain
\begin{align*}
\P_V\left(\bigcap_{i=1}^{\frac{n}{\chi\log n}} \{\Ham_{C_i}(\tbsigma,\bsigma) \le \delta \log n\}\right) 
\ge 1-o(1),
\end{align*}
for any $V\in \cG$. To show that $\tbsigma$ recovers $\bsigma$ almost exactly, we bound the required probability in \eqref{eq:almost_exact_recovery} as follows. Let $\eta' = \frac{ \eta }{2 \ka+\chi}$. Using Remark \ref{rem:Hamming_dist_relation}, we obtain
\begin{align*}
\P\big(\Ham(\tbsigma,\bsigma) \le \eta'n\big)  
&\wge \P\big(\Ham_V(\tbsigma,\bsigma) \le \eta'n \,\big| \,\cG\big)\ \P(\cG)\\
&\wge \P\bigg(\bigcap_{i=1}^{\frac{n}{\chi\log n}} \{\Ham_{C_i}(\tbsigma,\bsigma) \le \delta \log n\}\,\bigg| \,\cG\bigg) \ \P(\cG)\\
&\wge 1-o(1),
\end{align*}
where the second inequality is obtained since if $\tilde{\bsigma}$ makes fewer than $\delta \log n $ mistakes within each segment $C_i$, then it makes at most $\frac{n}{\chi \log n} \delta \log n  < \frac{n \eta}{2 \ka+\chi}$ mistakes on the whole. Since $\eta$ was arbitrary, the estimate $\tilde{\bsigma}$ recovers the communities almost exactly. 

For the second part of the theorem, note that since for every $u\in V$ the nodes in the visibility set $\cV(u)$ can be in at most $\frac{2 \ka}{\chi}+1$ segments, the number of mistakes among them can be at most $\Big(\frac{2 \ka}{\chi}+1\Big) \delta \log n < \eta \log n$. Thus, we have
\begin{align*}
\P\bigg(\bigcap_{u\in V}\Big\{ \Ham_{\cV(u)}(\tbsigma,\bsigma)  \le \eta \log n \Big\}\bigg) \ge 1-o(1).
\end{align*} 
\end{proof}

\subsection{Refinement phase}
Lines \ref{line:refine_start}--\ref{line:refine_end} of Algorithm~\ref{alg:full} refine the estimate $\tilde{\bsigma}$ obtained after the propagation phase to recover the ground truth communities up to a global sign flip. In this section we obtain a concentration bound on the quantity 
\[
g(u,\bsigma)
:= \sum_{v\in \cV(u)} \sigma_v \left[
A_{uv} \log \frac{p}{q} + (1-A_{uv})
\log \frac{1-p Q_{uv}}{1-q Q_{uv}} \right],
\]
where $\cV(u)$ is the visibility set of $u\in V$ (see Definition \ref{defn:visibility_set}). This is in turn used to prove Theorem~\ref{thm:exact_recovery}.

\begin{prop}
\label{prop:refinement_conc_bound}
For any $\eta'>0$,
\begin{align*}
\P(g(u,\bsigma) \ge - \eta' \log n \,|\,\sigma_u=-1) 
&\wle n^{\big[\frac{\eta'}{2} - \la I_{\phi}(p,q)\big]},\\
\P(g(u,\bsigma) <\eta' \log n \,|\,\sigma_u=+1) 
&\wle n^{\big[\frac{\eta'}{2} - \la I_{\phi}(p,q)\big]}.
\end{align*}
\end{prop}
\begin{proof}
Note that $g(u,\bsigma) = -\sum_v R_{uv}$ where $R_{uv}$ is introduced in \eqref{eq:ruv}. Similar to Section \ref{sec:first_moment}, we introduce the notation $\P_{-} (\P_+)$ and $\E_{-} (\E_+)$ for the probability and expectation conditioned on $\sigma_u = -1$ (resp., $\sigma_u = +1$). Using the Chernoff bound we obtain
\begin{align}
\P_{-}(g(u,\bsigma) \ge - \eta' \log n) 
&\le \exp\Big[t\eta' \log n+\log \E_{-}\big[e^{tg(u,\bsigma)} \big]\Big] \nonumber\\
&= \exp\Big[t\eta' \log n+\log \E_{-}\big[e^{-t\sum_v R_{uv}} \big]\Big].
\label{eq:mgf3}
\end{align}
The term $\E_{-}\big[e^{-t\sum_v R_{uv}} \big]$ is the moment generating function of a compound Poisson process since the sum is over the visible set of the vertex $u$. This evaluates to 
\begin{equation}
\E_{-}\big[e^{-t\sum_v R_{uv}} \big] 
= \exp \Big[2 \la \ka \log n \big(\E_{-}\big[\exp\big(-tR_{uv}\big)\big]-1\big)\Big].
\label{eq:mgf_sum}
\end{equation}

Computing the moment generating function of $R_{uv}$ similar to \eqref{eq:mgf2}, we obtain
\begin{align*}
\E_-\Big[\exp(-t R_{uv})\Big]
&= \frac{1}{2\ka} \int_0^{\ka } \Big[(p\phi(z))^t (q\phi(z))^{1-t}+(1-p\phi(z))^t(1-q\phi(z))^{1-t} \\
&\hspace{3.5cm}+(q\phi(z))^t (p\phi(z))^{1-t}+(1-q\phi(z))^t(1-p\phi(z))^{1-t}\Big] dz.
\end{align*}
Taking $t=1/2$ gives
\begin{align}
\E_-\Big[\exp(- R_{uv}/2)\Big]
&= \frac{1}{\ka} \int_0^\ka \Big[\sqrt{pq}\phi(z)+\sqrt{(1-p\phi(z))(1-q\phi(z))}\Big] dz.
\label{eq:mgf_ruv}
\end{align}
Substituting \eqref{eq:mgf_sum} and \eqref{eq:mgf_ruv} in \eqref{eq:mgf3},
\begin{align}
\P_{-}(g(u,\bsigma) \ge -\eta' \log n) 
&\le  \exp \Big[\frac{\eta'}{2} \log n +2 \la \ka \log n \big( \frac{1}{\ka} \int_0^\ka \Big[\sqrt{pq}\phi(z)+\sqrt{(1-p\phi(z))(1-q\phi(z))}\Big] dz-1\big)\Big] \nonumber\\
&= \exp \bigg[\frac{\eta'}{2} \log n +2 \la \ka \log n \Big( 1-\frac{I_{\phi}(p,q)}{2\ka}-1\Big)\bigg] \nonumber\\
&=n^{\big[\frac{\eta'}{2} - \la I_{\phi}(p,q)\big]}.
\label{eq:minus_error}
\end{align}
Similarly, conditioned on $\sigma_u=+1$, we obtain the moment generating function at $t=1/2$ to be
\begin{align}
\E_+\Big[\exp(R_{uv}/2)\Big]
&= \frac{1}{\ka} \int_0^\ka \Big[\sqrt{pq}\phi(z)+\sqrt{(1-p\phi(z))(1-q\phi(z))}\Big] dz,
\label{eq:mgf_ruv_plus}
\end{align}
giving 
\[
\P(g(u,\bsigma) <\eta' \log n \,|\,\sigma_u=+1) 
\wle n^{\big[\frac{\eta'}{2} - \la I_{\phi}(p,q)\big]}.
\]
\end{proof}

\subsection{Proof of Theorem~\ref{thm:exact_recovery}}
Let $\tbsigma$ be the output from Line~\ref{line:after_prop} of Algorithm~\ref{alg:full}.
To prove the correctness of the refinement phase, a natural way to proceed is to show that the probability of error that the algorithm makes in assigning the community to a single node is $o(\frac{1}{n})$ and then use a union bound. However, since there are a random (Poisson) number of nodes and the statistics $g(u,\hat{\bsigma})$ are dependent we use an alternate procedure that is detailed in \cite{gaudio2024exact}.

For this fix a $c>\la$ and let $\cG_0 = \{|V|<cn\}$. Since $|V|\sim \text{Poi}(\la n)$, using the Chernoff bound from Lemma~\ref{lem:conc_poisson} we have that 
$$\P(\cG_0^c) 
\le \exp \left[ -\frac{(c-\la)^2 n}{2c}\right] 
= o(1).$$ 
For (still to be determined) $\eta >0$, let $\cG_1$ be the event that for every node $u\in V$, the estimate $\tilde{\bsigma}$ makes at most $\eta \log n$ mistakes in the visibility set $\cV(u)$, i.e.,
$$\cG_1 =\cap_{u\in V}\{\Ham_{\cV(u)}(\tbsigma, \bsigma) \le \eta \log n\}.$$ From Theorem~\ref{thm:almost_exact}, we have that $\P(\cG_1^c) =o(1)$. From Remark \ref{rem:Hamming_dist_relation},
our interest is in bounding the  probability of the error event 
$\cE = \cup_{u\in V} \{\hsigma_u  \ne \sigma_u \sigma_{u_0}\}$. Note that 
\begin{equation}
\P(\cE) 
\wle \P(\cE \cap \cG_1 \cap \cG_0 ) + \P(\cE \cap \cG_1^c)+ \P(\cE \cap \cG_0^c)
\weq \P(\cE \cap \cG_1 \cap \cG_0 )+o(1).
\label{eq:error_e2}
\end{equation}

To address the term on the RHS, we couple the original model with another model in which number of nodes is deterministic.
First sample an integer $N \sim \Poi(\la n)$, and let $N' = \max\{N,cn\}$.
Then sample points $v_1,\dots,v_{N'}$ independently and uniformly at random in $(-n/2,n/2]$,
and denote $V = \{v_1,\dots,v_N\}$ and $V' = \{v_1,\dots,v_{N'}\}$.
Let $\bsigma' \colon V' \to \{-1,+1\}$ be sampled independently and uniformly at random.
Let $A'_{uv}$ be Bernoulli random variables with mean \eqref{eq:InteractionProduct} for all $u,v\in V'$.
Now $(V', \bsigma', \bA')$ constitutes a sample from an extended $\GKBM$ model.
Let $\bA'_{V,V}$ be the submatrix of $\bA'$ restricted to nodes in $V$,
and let $\bsigma'_V$ be the restriction of $\bsigma'$ to nodes in $V$.
Now we see that $(V, \bsigma'_V, \bA'_{V,V})$ is a sample from the original $\GKBM_n(\la,\phi,p,q)$ model.

Let $\hbsigma$ (resp.\ $\tbsigma$) be the output of the full (resp. only the Initialization and Propagation phases of) Algorithm~\ref{alg:full} on $(V,\bA'_{V,V})$. Define
$\tbsigma' \in \{-1, 0, +1\}^{V'}$ by
\[
\tsigma'_v
\weq 
\begin{cases}
\tsigma_v, &\quad v \in V, \\
0, &\quad \text{else}.
\end{cases},
\]
and $\hbsigma' \in \{\pm 1\}^{V'}$ by
\[
\hsigma'_u
\weq \sign( g(u, \tbsigma') ).
\]
Because $\tsigma'_u = 0$ for $u \notin V$, we see that the labels of the
auxiliary vertices $\{\tsigma'_u: u\in V'\setminus V\}$ do not affect the refined estimates $\hbsigma$ of nodes in $V$, so that
$\hsigma'_u = \hsigma_u$ for all $u \in V$.
It follows that
\begin{equation}
\P(\cE \cap \cG_1 \cap \cG_0 )
\le \sum_{u \in [cn]} \P( \{\hsigma'_u \ne \sigma'_u \sigma'_{u_0}\} \cap \cG_1 ).
\label{eq:error_e2_simp}
\end{equation}
Note that on the RHS of \eqref{eq:error_e2_simp} we have a model with a deterministic number of nodes and we wish to obtain the refined estimates $\hsigma'_u$ for all $u\in [cn]$ based on edges and non-edges to nodes in $V$. 

Let $W(u) = \{\tbsigma' : \cV(u) \rightarrow \{-1,0,+1\}\}$ be the set of community assignments on $\cV(u)$.  Additionally, note that for node $u \in [cn]$, $g(u,\tbsigma')$ depends only on the nodes in $\cV(u)$. Hence, for a fixed $u$, we can think of the quantity $g$ as a function with inputs being the node $u$ and the communities of nodes within the visibility set of $u$. In other words, $g(u,\tbsigma') \equiv g(u,\tbsigma'_{\cV(u)})$. We will use this notation in the following discussion. Let $W'(u;\eta)$ be the set of all community estimates that differ from the ground truth $\bsigma'$ on at most $\eta \log n$ nodes within $\cV(u)$, i.e.,
$$
W'(u;\eta) 
= \{\tbsigma' \in W(u): \Ham_{\cV(u)}(\tbsigma', \sigma'_{u_0}\bsigma') \le \eta \log n\}
= \{\tbsigma' \in W(u): \Ham_{\cV(u)}(\sigma'_{u_0}\tbsigma', \bsigma') \le \eta \log n\}.
$$
Consider a node $u\in[cn]$ such that $\sigma'_u = +1$. If node $u$ is assigned to the wrong community, then there must be at least one labeling $\tbsigma' \in W'(u;\eta)$ for which $g(u,\tbsigma') < 0$. A similar reasoning holds when $\sigma'_u= -1$. If we now define
\begin{align*}
\cE'_u 
:= \Big\{\{\sigma'_u = 1\} \cap \left\{\cup_{\tbsigma'\in W'(u;\eta)} \{g(u,\sigma'_{u_0}\tbsigma') < 0 \}\right\}\Big\}
\bigcup \Big\{\{\sigma'_u = -1\} \cap \left\{\cup_{\tbsigma'\in W'(u;\eta)}\{ g(u,\sigma'_{u_0}\tbsigma') \ge 0 \}\right\}\Big\},
\end{align*}
we have that $\P( \{\hsigma'_u \ne \sigma'_u  \sigma'_{u_0}\} \cap \cG_1 ) \le \P(\cE'_u)$, and from \eqref{eq:error_e2} and \eqref{eq:error_e2_simp} we obtain 
\begin{equation}
\P(\cE) \le \sum_{u=1}^{cn} \P(\cE'_u).
\label{eq:errorE2_union}
\end{equation}
Conditioning on the community of node $u$, we have
\begin{align}
\P(\cE'_u)
&\weq \frac{1}{2}\Big[\P(\cE'_u \, | \, \sigma'_u=-1)+\P(\cE'_u \, | \, \sigma'_u=+1)\Big] \nonumber\\
&\weq \frac{1}{2}\Big[\P(g(u,\tbsigma') \ge0 \,|\, \sigma'_u = -1)+\P(g(u,\tbsigma') <0\,|\, \sigma'_u = +1) \Big].
\label{eq:error_Eu}
\end{align}
We bound these probabilities by assuming that the initialization and propagation phases outputs the worst case estimate $\tbsigma'$. To go about this we obtain a bound on the difference $|g(u,\tbsigma') - g(u,\bsigma')|$ using the definition of $W'(u;\eta)$ as follows:
\begin{align}
|g(u,\sigma'_{u_0}\tbsigma')
&- g(u,\bsigma')| \nonumber\\
&= \Bigg|\sum_{\substack{v: A'_{uv}=1 \\ \tsigma'_v=\sigma'_{u_0}\\ \sigma'_v=-1}} 2\log \frac{p}{q} 
+ \sum_{\substack{v : A'_{uv}=0 \\ \tsigma'_v=\sigma'_{u_0}\\ \sigma'_v=-1}} 2\log \frac{1-pQ_{uv}}{1-qQ_{uv}} 
+ \sum_{\substack{v: A'_{uv}=1 \\ \tsigma'_v\neq\sigma'_{u_0}\\ \sigma'_v=+1}} 2\log \frac{q}{p} 
+ \sum_{\substack{v: A'_{uv}=0\\ \tsigma'_v\neq\sigma'_{u_0}\\ \sigma'_v=+1}} 2\log \frac{1-qQ_{uv}}{1-pQ_{uv}}\Bigg| \nonumber\\
&\le \Bigg|\left(2\log \frac{p}{q}+2\log \frac{1-p\eps}{1-q} \right) |\{v\in\cV(u):\tsigma'_v=\sigma'_{u_0}, \sigma'_v=-1\}|\\
& \hspace{2cm}+  \left(2\log \frac{q}{p}  +2\log \frac{1-q\eps}{1-p} \right)|\{v\in\cV(u):\tsigma'_v\neq \sigma'_{u_0}, \sigma'_v=+1\}|\Bigg|\nonumber \\
&\le \beta_\eps \eta \log n
\label{eq:refine_diff}
\end{align}
where $\beta_\eps := \Big|2\log \frac{p}{q}+2\log \frac{1-p\eps}{1-q}+2\log \frac{q}{p}  +2\log \frac{1-q\eps}{1-p} \Big|$. Thus the worst case estimate $\bsigma'$ is such that 
$$g(u,\bsigma') -\beta_\eps \eta \log n \le  g(u,\tbsigma') \le g(u,\bsigma') +\beta_\eps \eta \log n. $$
Using \eqref{eq:refine_diff}, the first term on the RHS in \eqref{eq:error_Eu} can be written as
\begin{align}
\P(g(u,\tbsigma') \ge 0\,|\, \sigma_u = -1) 
\wle \P(g(u,\bsigma') \ge -\beta_\eps \eta\log n\,|\, \sigma_u = -1)
\label{eq:minus_error_simp}
\end{align}
Similarly, conditioned on $\sigma_u=+1$, 
\begin{align}
\P(g(u,\tbsigma') <0|\sigma_u = +1) 
&\wle \P(g(u,\bsigma') <\beta \eta \log n\,|\,\sigma_u = +1).
\label{eq:plus_error}
\end{align}
Using Proposition \ref{prop:refinement_conc_bound} with $\eta'=\beta \eta $, along with \eqref{eq:plus_error}, \eqref{eq:minus_error_simp} and \eqref{eq:error_Eu} we get
\begin{align}
\P(\cE) 
&\le c n^{\big[1- \la I_{\phi}(p,q)+\frac{\beta \eta}{2} \big]}.
\end{align}
Since $\la I_{\phi}(p,q) >1$, choosing $\eta = \frac{\la I_{\phi}(p,q) -1}{\beta}$ yields $\P(\cE) \le n^{\big[\frac{1- \la I_{\phi}(p,q)}{2} \big]}=o(1) $. 
This proves the correctness of the refinement phase and shows exact recovery when $\la I_{\phi}(p,q) >1$.

\section{Conclusions}
\label{sec:conclusions}
In this work, we consider the problem of community recovery on block models in which edges are present based on the community of nodes as well as their geometric position in a Euclidean space. The dependence on the communities is through the intra-community and inter-community connection parameters $p$ and $q$ respectively, and the dependence on the underlying Euclidean space is via a geometric kernel $\phi$. For the one-dimensional case with two communities, we have obtained conditions on the model parameters $p,q,\phi$ for which no algorithm can recover the communities exactly. Additionally, we have provided a linear-time algorithm that guarantees recovery up to the information-theoretic threshold. Our techniques for the information-theoretic criterion (Section \ref{sec:info_metric}) extend to higher dimensions and larger number of communities as well. We also believe that our algorithm could be extended to higher dimensions by propagating over a spanning tree on the segments as in \cite{gaudio2024exact}. This constitutes an important topic for future work.  Another direction for future research is to extend the algorithm to cases when the parameters of the model are not known.

%
%

\bibliographystyle{abbrv}
\bibliography{references}

\begin{appendices}

\section{Some useful concentration bounds}

In this section, we provide some useful concentration bounds. These can be obtained from standard texts such as \cite{boucheron2003concentration}.

\begin{lem}[Hoeffding's inequality]
Let $X_1 ,\cdots , X_n$ be independent random variables such that $X_i$ takes its values in $[a_i , b_i ]$ almost surely for all $i \le n$. Let
$S = \sum_{i=1}^n (X_i-\E[X_i])$. Then for every $t>0$,
\[
 \P(|S|\ge t)
 \wle 2\exp\bigg[-\frac{2t^2}{\sum_{i=1}^n (b_i-a_i)^2}\bigg].
\]    
\label{lem:hoeffding}
\end{lem}

\begin{lem}[Chernoff bound for Poisson random variables]
\label{lem:conc_poisson}
Let $X$ be Poisson-distributed with mean $\mu > 0$.
Then
\[
 \P(X \ge t)
 \wle e^{-\mu h(t/\mu)} \wle \exp\left[-\frac{(t-\mu)^2}{2t}\right]
 \qquad \text{for all $t \ge \mu$},
\]
and
\[
 \P(X \le t ) 
 \wle e^{-\mu h(t/\mu)}
 \qquad \text{for all $0 < t < \mu$},
\]
where $h(x) = x \log x + 1 - x$.
\end{lem}

\begin{lem}[Chernoff bound for binomial random variables]
Let $X \sim \Bin(n,p)$ with mean $\mu=np$.
For any $t>0$, we have
\[
 \P(X \ge \mu(1+t))
 \wle \left(\frac{e^t}{(1+t)^{(1+t)}}\right)^\mu.
\]
\label{lem:conc_binomial}
\end{lem}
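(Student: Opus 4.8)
The plan is to prove this via the exponential Markov inequality (the Chernoff method), followed by optimisation of the tilting parameter. Throughout, I read the hypothesis as: $X = \sum_{i=1}^{n} X_i$ is a sum of i.i.d.\ Bernoulli$(p)$ indicators with mean $\mu := \E[X] = np$ (so that ``$\mu$'' in the statement denotes the mean, and the bound is the usual multiplicative Chernoff estimate). The first step is: for any $s > 0$, apply Markov's inequality to the nonnegative random variable $e^{sX}$ to get
\[
\P(X \ge \mu(1+t)) \ = \ \P\!\left(e^{sX} \ge e^{s\mu(1+t)}\right) \ \le \ e^{-s\mu(1+t)}\,\E\!\left[e^{sX}\right].
\]

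The second step is to control the moment generating function. By independence of the $X_i$, $\E[e^{sX}] = \prod_{i=1}^n \E[e^{sX_i}] = (1 + p(e^s-1))^n$, and applying the elementary inequality $1 + x \le e^x$ to each of the $n$ factors yields $\E[e^{sX}] \le e^{np(e^s-1)} = e^{\mu(e^s-1)}$. This is the only genuine inequality in the argument, and it is exactly what turns the exact binomial m.g.f.\ into a quantity depending on $X$ only through $\mu$. Combining with the first step gives, for every $s > 0$,
\[
\P(X \ge \mu(1+t)) \ \le \ \exp\!\big(\mu(e^s - 1 - s(1+t))\big).
\]

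The final step is to optimise the exponent over $s > 0$. Differentiating $e^s - 1 - s(1+t)$ in $s$ and setting the derivative to zero gives $e^s = 1 + t$, i.e.\ $s = \log(1+t)$; since $t > 0$ this is strictly positive and hence admissible (and the second derivative $e^s>0$ confirms it is the minimiser). Substituting it back collapses the exponent to $\mu\big(t - (1+t)\log(1+t)\big)$, and rewriting
\[
\exp\!\big(\mu(t - (1+t)\log(1+t))\big) \ = \ \left(\frac{e^{t}}{(1+t)^{1+t}}\right)^{\!\mu}
\]
yields the claim. I do not expect any real obstacle here: this is the textbook multiplicative Chernoff bound, and every step is a one-line computation. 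The only two points worth stating with care are that the bound $1 + x \le e^x$ is what makes the final estimate depend on $\mu = \E[X]$ alone, and that the optimiser $s = \log(1+t)$ lies in $(0,\infty)$ precisely because $t > 0$.
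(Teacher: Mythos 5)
Your proof is correct and is the standard Chernoff-method derivation of the multiplicative binomial tail bound. Note that the paper does not actually prove Lemma~\ref{lem:conc_binomial}: it is stated without proof and attributed to a standard reference (Boucheron et al.), so there is no ``paper's proof'' to compare against --- your argument simply fills in what the paper delegates to a citation. Two small remarks. First, you are right to flag that the notation $\text{Bin}(n,\mu)$ is used loosely in the statement: the way the lemma is invoked in the propagation step (with $Z \sim \text{Bin}(\Delta\log n, \eta_1 n^{-c_2})$ and the bound applied with exponent $\mu_Z = \E[Z]$) confirms that $\mu$ in the conclusion must denote the mean $\E[X] = np$, exactly as you read it; a literal reading with $\mu$ as the success probability would make the event $\{X \ge \mu(1+t)\}$ essentially always occur. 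Second, your identification of the single ``real'' inequality --- the bound $1 + p(e^s-1) \le e^{p(e^s-1)}$, which makes the estimate depend on $X$ only through $\mu$ and hence gives a bound that is monotone in $\mu$ --- is exactly the observation that justifies using this lemma on a dominating binomial as the paper does, so it is a worthwhile point to have made explicit.
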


\begin{lem}
\label{lem:conc_poisson_new}
Let $X$ be Poisson-distributed with mean $\mu \log n$. Then for any $0 < \delta < \mu$,
\[
 \P( X \le \delta \log n )
 \wle n^{- \mu h(\de/\mu)},
\]
where
$h(x) = x \log x + 1 - x$.
Furthermore, if $0 < \al < \mu-1$, then
\[
 \P( X \le \be \log n )
 \wle n^{-(1 + \al)}
\]
with $\be = \mu h^{-1}( \frac{1+\al}{\mu} )$,
where $h^{-1}(\cdot)$ denotes the inverse of $h(\cdot)$ on $(0,1)$.
\end{lem}

\begin{proof}
The first part of the lemma is a direct consequence of the bound on the lower tail of a Poisson random variable in Lemma~\ref{lem:conc_poisson}.

Assume next that that $0 < \al <\mu-1$.
Then $\frac{1+\al}{\mu} \in (0,1)$. Because $h$ is a strictly decreasing
bijection from $(0,1)$ onto $(0,1)$, we may define
$\be = \mu h^{-1}( \frac{1+\al}{\mu} )$.
Then $h(\be/\mu) = \frac{1+\al}{\mu}$,
and the second claim follows from the first.
\end{proof}

\section{MAP is Bayes optimal }
\label{app:MAP_Bayes_opt}
In this section, we show that the MAP estimate defined in \eqref{eq:MAP} is Bayes optimal. While it is a well-known result (see \cite[Section 5.7.1]{murphy2012machine}) that the MAP estimate is Bayes optimal for the $0$-$1$ loss or the Hamming loss, we were unable to locate a reference that shows the same result for the permutation invariant Hamming loss defined in \eqref{eq:Hamming_dist}. 

We now extend this result to the case of the permutation invariant Hamming distance.
To provide a general result, in the following, we consider $n$ nodes in $K$ communities with community assignment $\bsigma = (\sigma_1,\sigma_2,\cdots,\sigma_n) \in [K]^n$. Let $\cS_K$ is the permutation group on $K$ elements. For any $\pi \in \cS_K$, $\pi \circ \bsigma = (\pi(\sigma_1),\pi(\sigma_2),\cdots,\pi(\sigma_n))$. For any two community vectors $\bsigma, \btau \in [K]^n$, define a relation
\begin{equation}
    \label{eq:relation}
    \bsigma \sim \btau \text{ iff } \exists \pi \in \cS_K \text{ such that } \pi\circ \btau = \bsigma.
\end{equation}
\begin{claim}
    The relation $\sim$ defined in \eqref{eq:relation} is an equivalence relation.
\end{claim}
\begin{proof}
    The reflexive property holds with the identity permutation. The symmetric property holds since if $\bsigma = \pi \circ \btau$ for some $\pi \in \cS_K$, then $\btau = \pi^{-1} \circ \bsigma$. Finally, the transitive property is also satisfied since if $\bsigma = \pi_1 \circ \btau_1$ and $\btau_1 = \pi_2 \circ \btau_2$ for any $\btau_1,\btau_2 \in [K]^n$, then $\bsigma = (\pi_1\circ \pi_2)\circ \btau_2$.
\end{proof}
Take $\zeta = \{\Theta_1,\Theta_2,\cdots\}$ to be the set of all equivalence classes of the relation $\sim$ defined in \eqref{eq:relation}. Each equivalence class assimilates all community assignments that differ by a permutation of the labels. Denote a generic element of this set by $\Theta$. Given a parameter $\theta \in \Theta$, a graph $G$ is generated on the $n$ vertices from a distribution $P_\theta$. We make the following assumption on the distributions $\{P_\theta,\theta \in \Theta\}$:
\begin{assumption}
    The distributions $\{P_\theta,\theta \in \Theta\}$ are permutation invariant, i.e., $P_\theta = P_{\pi \circ \theta}$ for any $\pi \in \cS_K$.
    \label{ass:prob_perm_invariant}
\end{assumption}
Consider the estimation problem of recovering the equivalence class $\Theta$ by observing the graph $G$ under the $0$-$1$ loss $L(\hat{\Theta},\Theta) = \1_{\{\hat{\Theta} \neq \Theta\}}$. Being a point estimation problem, it is known that the MAP estimate $\Tmap = \arg \max_{\Theta \in \zeta} \P(\Theta | G)$ minimizes the posterior expected loss which is to say
\[
\Tmap = \arg\min_{\Theta' \in \zeta}\ \E\big[L(\Theta', \Theta)\big], \quad \text{and therefore }\quad \P(\Tmap \neq \Theta) \weq  \min_{\Theta' \in \zeta}\ \P(\Theta'\neq \Theta).
\]
Here $\P$ denotes the posterior distribution.
Specializing this to our situation, note that the event $\{\Theta' \neq \Theta\}$ means that the corresponding equivalence classes are different. Since the equivalence classes are disjoint, it should not be possible to obtain an estimate $\theta' \in \Theta'$ via any permutation $\pi \circ \theta $ for $\theta \in \Theta$ when $\Theta \neq \Theta'$. This corresponds to $\Ham(\theta',\theta) > 0$. In the case of $K=2$ communities labelled $\{-1,+1\}$, this can simply be written as $\P(\theta' \not \in \{\theta,-\theta\})$ as done in \eqref{eq:MAP_Bayes_opt}. Note that Assumption \ref{ass:prob_perm_invariant} is necessary in order for the distributions associated with an equivalence class to be the same. This is satisfied in our case since the connections depend only on whether two nodes are within the same community or not. However, for multiple communities, Assumption \ref{ass:prob_perm_invariant} imposes strong conditions on the allowed distributions. While homogeneous models in which intra-community connection probability is $\pin$ and inter-community connection probability is $\pout$ satisfy the assumption, the presented proof technique does not extend to more general settings.

\section{Essentials of Poisson point processes}
\label{sec:ppp}

Denote the space of all locally finite measures on $\torus$ by $\bN$. We first provide the univariate and the bivariate Mecke equations which are used in \eqref{eq:meanZ} and \eqref{eq:Palm_second_moment} of Section \ref{sec:info_metric} respectively.
\begin{thm}[Mecke equation]
Let $0<\la<\infty$ and $\eta$ be a point process of intensity $\la$ on $\torus$. Then $\eta$ is a Poisson point process if and only if 
\[
\E\Big[\sum_{u\in \eta}f(u,\eta)\Big] = \la \int \E[f(x,\eta\cup \{u\})] \,dx= \la \int \E^{u}[f(x,\eta)] \, dx,
\]
for all measurable functions $f$ defined on $\torus \times \bN$.
\label{thm:uni_mecke}
\end{thm}
\begin{thm}[Bivariate Mecke equation]
Let $\eta$ be a Poisson process on $\torus$ with intensity $\la$. Then for every measurable function on $\torus^2 \times \bN$,
\[
\E\bigg[\sum_{u\neq u'}f(u,u',\eta)\bigg] = \la^2 \int \int  \E\Big[f(u,u',\eta\cup\{u,u'\}) \Big] \, du \, du'= \la^2 \int \int  \E^{u,u'}\Big[f(u,u',\eta) \Big]\, du \,du'.
\]
\label{thm:multi_mecke}
\end{thm}
For additional explanation about these theorems, the reader is referred to \cite[Chapter 9]{last2017lectures} and \cite[Chapter 6]{baccelli2020random}.
\end{appendices}

\end{document}